\title{ A limiting random analytic  function related to the CUE}
\author{Reda \textsc{Chhaibi}        \footnote{\texttt{reda.chhaibi@math.uzh.ch}},
        Joseph \textsc{Najnudel}     \footnote{\texttt{joseph.najnudel@math.univ-toulouse.fr}},
        Ashkan \textsc{Nikeghbali}   \footnote{\texttt{ashkan.nikeghbali@math.uzh.ch}}
        } %\footnotemark
\DeclareMathOperator{\Var}{Var}
\begin{document}

\def\half{\frac{1}{2}}

\def\l{\lambda}
\def\t{\theta}
\def\T{\Theta}
\def\m{\mu}
\def\a{\alpha}
\def\b{\beta}
\def\g{\gamma}
\def\o{\omega}
\def\p{\varphi}
\def\D{\Delta}
\def\O{\Omega}

%Ordinals
\def\N{{\mathbb N}}
\def\Z{{\mathbb Z}}
\def\Q{{\mathbb Q}}
\def\R{{\mathbb R}}
\def\C{{\mathbb C}}

%Probability
\def\P{{\mathbb P}}
\def\E{{\mathbb E}}

%Caligraphed letters
\def\Ac{{\mathcal A}}
\def\Bc{{\mathcal B}}
\def\Cc{{\mathcal C}}
\def\Dc{{\mathcal D}}
\def\Ec{{\mathcal E}}
\def\Fc{{\mathcal F}}
\def\Gc{{\mathcal G}}
\def\Hc{{\mathcal H}}
\def\Ic{{\mathcal I}}
\def\Kc{{\mathbb K}}
\def\Lc{{\mathcal L}}
\def\Oc{{\mathcal O}}
\def\Pc{{\mathcal P}}
\def\Qc{{\mathcal Q}}
\def\Rc{{\mathcal R}}
\def\Sc{{\mathcal S}}
\def\Tc{{\mathcal T}}
\def\Uc{{\mathcal U}}
\def\Zc{{\mathcal Z}}

%Fraktur
\def\afrak{{\mathfrak a}}
\def\bfrak{{\mathfrak b}}
\def\gfrak{{\mathfrak g}}
\def\hfrak{{\mathfrak h}}
\def\kfrak{{\mathfrak k}}
\def\nfrak{{\mathfrak n}}
\def\pfrak{{\mathfrak p}}
\def\slfrak{{\mathfrak sl}}

\maketitle

\setlength{\footskip}{2cm}

\newtheorem{corollary}{Corollary}[section]
\newtheorem{question}{Question}[section]
\newtheorem{conjecture}{Conjecture}[section]
\newtheorem{definition}{Definition}[section]
\newtheorem{example}{Example}[section]
\newtheorem{lemma}{Lemma}[section]
\newtheorem{proposition}{Proposition}[section]
\newtheorem{properties}{Properties}[section]
\newtheorem{property}{Property}[section]
\newtheorem{rmk}{Remark}[section]
\newtheorem{thm}{Theorem}[section]

\newcommand{\Card}[1]{\textup{Card($#1$)} \xspace}

\begin{abstract}
We show in this paper that, when properly rescaled in time and in space, the characteristic polynomial of a random unitary matrix converges almost surely to a random analytic function whose  zeros, which are on the real line, form a determinantal point process with sine kernel. We prove this result in the framework of virtual isometries to circumvent the fact that the rescaled characteristic polynomial does  not even have a moment of order one, hence making the classical techniques of random matrix theory difficult to apply.
\end{abstract}
% --------------------------------------------------------------------
% Table of contents
%\tableofcontents

% --------------------------------------------------------------------
\section{Introduction}

A major breakthrough in the so called random matrix approach in number theory is  the seminal paper of Keating and Snaith \cite{bib:KS}, where they conjecture that the characteristic polynomial of a random unitary matrix, restricted to the unit circle, is a good and accurate model to predict the value distribution of the Riemann zeta function on the critical line.  In particular, using this philosophy, they were able to conjecture the exact asymptotics of the moments of the Riemann zeta function, a result which was considered to be out of reach with classical tools form analytic number theory. One simple and naive explanation for the success of the characteristic polynomial as a random model to the Riemann zeta function comes from Montgomery's conjecture that the zeros of the Riemann zeta function on the critical line (after rescaling) statistically behave like the eigenangles (after rescaling) of large random unitary matrices.  Moreover the limiting point process obtained from the eigenvalues is a determinantal point process with the sine kernel. A natural question which then naturally arose in the community was the existence of a random analytic function with zeros which from a determinantal point process with the sine kernel and  which would be obtained as a limiting object from characteristic polynomials. As we shall see below, the sequence of characteristic polynomials of random unitary matrices of growing dimensions does not converge. We shall nonetheless prove that after a proper rescaling in "time" (the characteristic polynomial can be viewed as a stochastic process with parameter $z\in\C$, and we shall consider the characteristic polynomial at the scale $z/n$) and space (that is we normalize with the value of the characteristic polynomial at $1$), this sequence converges locally uniformly on compact subsets of the complex plane to a random analytic function with the desired property. The convergence will be proved to occur almost surely, thanks to the use of virtual isometries introduced in \cite{bib:BNN}. The basic idea behind virtual isometrics is that of coupling the different dimensions of the unitary groups $U(n)$ together in such a way that marginal distribution on each $U(n)$, for fixed $n$, is the Haar measure. This strong convergence will in turn imply the weak convergence of the same objects. But since our rescaled characteristic polynomials do not even have a moment of order one, proving the weak convergence as stated in Theorem \ref{thm:maininlaw} with classical methods does not seem to be an easy task. On the other hand combining some of the fine estimates on the eigenvalues from \cite{bib:MNN} which make strong use of  the coupling from virtual isometries and some other classical estimates on sine kernel determinantal point processes is enough to establish almost sure convergence. In the sequel, we introduce the main objects and notation and state our main theorems.

In the above mentioned paper by Keating and Snaith \cite{bib:KS}, the authors computed the moments of the characteristic polynomial of a random unitary 
matrix following the Haar measure. They deduced that the characteristic polynomial asymptotically behaves like a 
log-normal distributed random variable when the dimension $n$ goes to infinity: more precisely, 
its logarithm, divided by $\sqrt{\log n}$, tends to a complex Gaussian random variable 
$Z$ such that $\mathbb{E}[Z] = \mathbb{E} [Z^2] = 0$ and $\mathbb{E} [|Z|^2]= 1$. 
This result has been generalized in Hughes, Keating and O'Connell \cite{bib:HKO}, where the authors proved the asymptotic
independence of the characteristic polynomial taken at different fixed points. A question which then naturally arises  concerns the behavior of the characteristic polynomial at points which vary with the 
dimension and which are sufficiently close to each other in order to avoid asymptotic independence. 
The scale we consider in the present paper is the average spacing of the eigenangles of a unitary
matrix in 
dimension $n$, i.e. $2 \pi/n$. 
More precisely, let $(U_n)_{n \geq 1}$ be a sequence of matrices, $U_n$ being Haar-distributed in $U(n)$, and let 
$Z_n$ be the characteristic polynomial of $U_n$: 
$$Z_n(X) = \operatorname{det} ( X - U_n).$$
For a given $z \in \mathbb{C}$, we consider the value of $Z_n$ at the two points $1$ and $e^{2 i z \pi/n}$, 
whose distance is equivalent to $2 \pi |z|/n$ when $n$ goes to infinity. We know that the law of $Z_n(1)$ can be approximated 
by the exponential of a gaussian variable of variance $\log n$, so it does not converge when $n$ goes to infinity: the same is true for $Z_n(e^{2 i z \pi/n})$. 
In order to obtain a convergence in law, it is then natural to consider the ratio $Z_n(e^{2i z \pi/n})/ Z_n(1)$,
which has order of magnitude $1$ and which is well-defined as soon 
as $1$ is not an eigenvalue of $U_n$, an event occuring almost surely. 

If we consider all the values of $z$ together, we obtain a random entire function $\xi_n$, defined by 
$$\xi_n (z) = \frac{Z_n(e^{2i z \pi/n})}{Z_n(1)}.$$
We will prove that this function has a limiting distribution when $n$ goes to infinity. More precisely, the 
main result of this article is the following: 
\begin{thm} \label{thm:maininlaw}
 In the space of continuous functions from $\mathbb{C}$ to $\mathbb{C}$, endowed with the topology of uniform 
 convergence on compact sets, the random entire function $\xi_n$ converges in law to a limiting 
 entire function $\xi_{\infty}$. The zeros of $\xi_{\infty}$ are all real and form a determinantal sine-kernel point 
 process,
 i.e. for all $r \geq 1$, the $r$-point correlation function $\rho_r$ corresponding to this point process is 
 given, for all $x_1, \dots, x_r \in \mathbb{R}$, by 
 $$\rho_r (x_1, \dots, x_r) = \operatorname{det} \left( \frac{\sin [\pi(x_j - x_k)]}{ \pi(x_j - x_k)} \right)_{1 \leq 
 j,k  \leq r}. $$
 \end{thm}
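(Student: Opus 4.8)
The plan is to factor the proof into two essentially independent pieces: first establishing the almost sure convergence of $\xi_n$ to a limiting random entire function $\xi_\infty$ in the virtual isometry coupling, and second identifying the zero set of $\xi_\infty$ as a determinantal sine-kernel point process. The weak convergence asserted in the theorem is then an immediate consequence of the almost sure convergence under the coupling, since convergence in the space of continuous functions with the topology of uniform convergence on compacts is a statement about laws that is insensitive to the particular coupling realizing it.

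Let me sketch the first piece. Working in the space of virtual isometries from \cite{bib:BNN}, one has a single probability space carrying $(U_n)_{n\geq 1}$ with each $U_n$ Haar-distributed on $U(n)$ and with the eigenangles coupled across dimensions. The fine eigenvalue estimates of \cite{bib:MNN} control, almost surely, the location of the rescaled eigenangles near the point $1$: after multiplying the eigenangles of $U_n$ by $n/(2\pi)$, the rescaled points converge almost surely (locally, in a labelled sense) to the points of a sine-kernel process. I would write $\xi_n(z)$ as a Hadamard-type product over eigenangles, $\xi_n(z) = \prod_{k} \frac{1 - e^{2 i z \pi/n} e^{-i\theta_k^{(n)}}}{1 - e^{-i\theta_k^{(n)}}}$, and show that, uniformly on compact subsets of $\C$, this product converges to the corresponding product $\prod_k \left(1 - \frac{z}{x_k}\right)$ (suitably regularized) over the limiting points $(x_k)$. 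The delicate issue here is the contribution of eigenangles \emph{far} from $1$: individually each such factor is close to $1$, but there are order $n$ of them, so one must show the logarithm of the tail product is controlled. This is exactly where the non-integrability obstruction lives — $\xi_n(z)$ has no first moment precisely because of $Z_n(1)$ in the denominator — and it is why one cannot argue via convergence of moments; instead one uses the pathwise estimates from \cite{bib:MNN} on $\log|Z_n(1)|$ and on the small eigenangles, plus a standard argument bounding the ``bulk'' contribution by a central-limit-type fluctuation that is $o(1)$ after the normalization. Once the product converges locally uniformly almost surely, $\xi_\infty$ is entire, non-identically-zero, and its zero set is exactly $\{x_k\}$, the almost sure limit of the rescaled eigenangles.

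For the second piece, one must check that $\{x_k\}$ is the sine-kernel determinantal point process. This follows from Montgomery-type / classical random matrix results: the rescaled eigenangle process of a Haar-distributed $U_n$, near a fixed point, converges weakly to the determinantal process with kernel $\frac{\sin\pi(x-y)}{\pi(x-y)}$ (this is the bulk scaling limit for the CUE, originally due to Dyson). Under the virtual isometry coupling the convergence is in fact almost sure, so the limiting labelled point configuration $\{x_k\}$ has exactly this law. In particular its $r$-point correlation functions are the claimed determinants. Finally one notes that all zeros are real because each $x_k$ is a real number (a limit of real rescaled eigenangles), and $\xi_\infty$ has no other zeros since they coincide with the limiting zero set of $\xi_n$.

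**The main obstacle.** The genuinely hard step is the first one: controlling the tail product, i.e. showing that the product over eigenangles away from $1$ converges (after the space normalization by $Z_n(1)$) without appealing to moments, which do not exist. Everything must be done pathwise, leaning on the precise almost sure estimates of \cite{bib:MNN} for both the small eigenangles and the quantity $Z_n(1)$, together with a uniform (in $n$, on compacts in $z$) tail bound for the logarithm of the product, so that one may pass to the limit by dominated convergence along the coupling and obtain a locally uniform almost sure limit. The identification of the limit as sine-kernel and the reality of the zeros are comparatively routine given the cited results.
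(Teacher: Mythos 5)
Your overall skeleton is the paper's: work in the virtual isometry coupling, prove almost sure locally uniform convergence of $\xi_n$ (this is Theorem \ref{thm:main}), identify the zeros of the limit with the almost sure sine-kernel limit $(y_k)_{k\in\Z}$ of the rescaled eigenangles given by Theorem \ref{thm:as_cv_sine}, and observe that convergence in law is an immediate consequence. That reduction is correct and is exactly how the paper deduces Theorem \ref{thm:maininlaw}.

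However, the step you yourself call the main obstacle is where all the content lies, and the tools you propose for it are not the ones that work. First, no pathwise control of $\log|Z_n(1)|$ is needed or used: the normalization by $Z_n(1)$ cancels identically once $\xi_n$ is rewritten, via the periodicization $y_{k+jn}^{(n)} = y_k^{(n)} + jn$ and the Euler product formula for the sine, as the exact identity $\xi_n(z) = e^{i\pi z}\prod_{k\in\Z}\bigl(1 - z/y_k^{(n)}\bigr)$ (first proposition of Section \ref{section:convergence}). This algebraic step is what converts the ``order $n$ distant eigenvalues'' problem into a tail estimate for an infinite product over the rescaled, periodicized eigenangles. Second, the tail is then controlled not by a central-limit-type cancellation but by the uniform estimate $y_k^{(n)} = k + O(\log(2+|k|))$, almost surely uniformly in $n$ and $k$ (Proposition \ref{proposition:key_estimate}). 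For $|k|$ comparable to a power of $n$ this rests on the almost sure bound $\sup_{|z|=1}|\Im\log Z_n(z)| = O(\log n)$ (Proposition \ref{proposition:maxZn}), proved from the explicit moment generating function of $\Im\log Z_n(1)$ in \cite{bib:BHNY} via a Chernoff bound at $n$ points of the circle, Borel--Cantelli, and the counting identity of Corollary \ref{corollary:kykn}; for small $|k|$ it combines the quantitative coupling rate in Theorem \ref{thm:as_cv_sine} with a counting estimate for the sine-kernel process (Bernstein inequality plus the Costin--Lebowitz variance bound). Without such a uniform-in-$n$ bound, pointwise convergence $y_k^{(n)}\to y_k$ for finitely many $k$ gives you no control of the infinite product, and your appeal to dominated convergence has nothing to dominate with. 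So the concrete gap is: the exact product identity and the argument-of-$Z_n$ estimates (Sections \ref{section:estimatesZn}--\ref{section:estimatesykn} of the paper) are missing, and ``pathwise estimates on $\log|Z_n(1)|$ plus a standard CLT-type bound'' does not substitute for them.
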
 
 Taking a finite number of points $z_1, \dots, z_p \in \mathbb{C}$, we see in particular that the joint law of 
 the mutual ratios of $Z_n (e^{2 i \pi z_1/n}), \dots, Z_n (e^{2 i \pi z_p/n})$ converges when $n$ goes to infinity. 
 
 In order to prove Theorem \ref{thm:maininlaw}, we will define the sequence $(U_n)_{n \geq 1}$ of 
 unitary matrices in a common probability space, with a coupling chosen in such a way that an almost sure convergence 
 occurs. An interest of this method is that it is more convenient to deal with pointwise convergence than with 
 convergence in law when we work on a functional space. Moreover, the coupling gives 
 a powerful way to track the sequence $(\xi_n)_{n \geq 1}$ of holomorphic function, and a 
 deterministic link between this sequence and the limiting function $\xi_{\infty}$. 
Besides it is important to stress that the moments method, which is a classical technique in random matrix theory, is  
impossible to implement. Indeed the random function at hand $\xi_n$ does not have any integer moment when evaluated on circle, which 
makes the use of the formulas on moments of ratios in \cite{bib:BG} and \cite{bib:CFZ08} difficult to use. For 
example, in Theorem 3 of the article \cite{bib:BG}, one clearly sees the divergence of ratios, as the evaluation 
points get close to $1$.
 The coupling we consider here corresponds to the notion of {\it virtual isometries}, as defined by Bourgade, Najnudel 
 and Nikeghbali in \cite{bib:BNN}. The sequence $(U_n)_{n \geq 1}$ can be constructed in the following way: 
 \begin{enumerate}
 \item One considers a sequence $(x_n)_{x \geq 1}$ of independent random vectors, $x_n$ being uniform on the unit
 sphere of 
 $\mathbb{C}^n$. 
 \item Almost surely, for all $n \geq 1$, $x_n$ is different from the last basis vector $e_n$ of $\mathbb{C}^n$, which 
 implies that there exists a unique $R_n \in U(n)$ such that $R_n(e_n) = x_n$ and $R_n - I_n$ has rank one. 
 \item We define $(U_n)_{n \geq 1}$ by induction as follows: $U_1 = x_1$ and for all $n \geq 2$, 
 $$U_n = R_n \left( \begin{array}{cc}
U_{n-1} & 0\\
0 & 1 \end{array} \right).$$
 \end{enumerate}
It has already been proven in \cite{bib:BHNY} that with this construction, $U_n$ follows, for all $n \geq 1$, the 
Haar measure on $U(n)$. From now on, we always assume that the sequence $(U_n)_{n \geq 1}$ is defined with this coupling.

For each value of $n$, let $\lambda_1^{(n)}, \dots, \lambda_n^{(n)}$ be the eigenvalues of $U_n$, ordered 
counterclockwise, starting from $1$: they are almost surely pairwise distinct and different from $1$. 
If $1 \leq k \leq n$, we denote by $\theta_k^{(n)}$ the argument of $\lambda_k^{(n)}$, taken 
in the interval $(0, 2\pi)$: $\theta_k^{(n)}$ is the $k$-th strictly positive eigenangle of $U_n$. 
If we consider all the eigenangles of $U_n$, taken not only in $(0, 2 \pi)$ but in the whole real line, 
we get a $(2 \pi)$-periodic set with $n$ points in each period.  If the eigenangles are 
indexed increasingly by $\mathbb{Z}$, 
we obtain a sequence
$$ \dots < \theta_{-1}^{(n)} < \theta_{0}^{(n)} < 0 < \theta_{1}^{(n)} < \theta_{2}^{(n)} < \dots, $$
for which $\theta_{k+n}^{(n)} = \theta_k^{(n)} + 2 \pi$ for all $k \in \mathbb{Z}$. 

It is also convenient to extend the sequence of eigenvalues as a $n$-periodic sequence indexed 
by $\mathbb{Z}$, in such a way that for all $k \in \Z$, 
$$ \lambda_k^{(n)} = \exp\left( i\theta_k^{(n)} \right). $$

With the notation above, the following holds:
\begin{thm}[Theorem 7.3 in \cite{bib:MNN}]
\label{thm:as_cv_sine}
Almost surely, the point process 
$$\left( y_k^{(n)} := \frac{n}{2 \pi} \theta_{k}^{(n)} , \ k \in \Z \right)$$
converges pointwise to a determinantal sine-kernel point process $\left( y_k , \ k \in \Z \right)$. And 
moreover, almost surely, the following estimate holds for all $\varepsilon  > 0$:
$$ \forall  k \in [-n^{\frac{1}{4}},n^{\frac{1}{4}}],  \ y_k^{(n)} = y_k + O_\varepsilon\left( (1+k^2) n^{-\frac{1}{3}+\varepsilon} \right)  $$
\end{thm}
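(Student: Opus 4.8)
The plan is to follow the route of \cite{bib:MNN}, exploiting the special structure of the virtual isometry coupling. Since $U_n = R_n\,\mathrm{diag}(U_{n-1},1)$ with $R_n-I_n$ of rank one, the characteristic polynomial $Z_n$ is a rank-one (secular) perturbation of $(X-1)Z_{n-1}(X)$; consequently the eigenvalues of $U_n$ interlace on the unit circle with those of $U_{n-1}$ together with the point $1$, and the whole eigenangle configuration at level $n$ is a deterministic function of the configuration at level $n-1$ and of the coordinates of the fresh uniform vector $x_n$. Because the $x_n$ are independent, this presents $\big(\theta^{(n)}_\bullet\big)_{n\ge 1}$ as a non-homogeneous Markov chain in $n$ driven by independent randomness, so the problem reduces to quantifying how far a given rescaled eigenangle $y^{(n)}_k$ moves as $n$ grows.

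First I would set up the two a priori inputs that control these moves. \emph{Rigidity}: with overwhelming probability $y^{(n)}_k$ stays within $O_\varepsilon(|k|^\varepsilon)$, and in any case within $O(\sqrt{\log n})$ uniformly over $|k|\le n$, of its classical position $k$; this comes from the known control of $\theta^{(n)}_k-2\pi k/n$ for the CUE (fluctuations of order $\sqrt{\log|k|}/n$) together with a union bound. \emph{Repulsion}: consecutive eigenangles satisfy $\theta^{(n)}_{k+1}-\theta^{(n)}_k\gg n^{-1-\varepsilon}$ with overwhelming probability, which follows from the two-point correlation function of the relevant determinantal kernel at mesoscopic scale; this is what keeps the secular equation well-conditioned. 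The delicate point already here is that both estimates must be enforced \emph{simultaneously over all levels $m$ in the relevant range and all indices $k$ in question}, which is arranged by Borel--Cantelli once the exceptional probabilities are bounded by an inverse power of $n$.

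Next I would estimate a single step through the secular equation: conditionally on the level-$(n-1)$ configuration, the displacement of $y^{(n)}_k$ is driven by the new weights $|(x_n)_j|^2$, which are $\mathrm{Beta}(1,n-1)$-distributed, hence concentrated around $1/n$; feeding in the rigidity and repulsion bounds turns the secular equation into a quantitative estimate for the conditional size of $y^{(n)}_k-y^{(n-1)}_k$. Summing these increments over $m\ge n$ and using the concentration of the relevant partial sums, one gets that $\big(y^{(n)}_k\big)_n$ is almost surely a Cauchy sequence and, more precisely, that its tail is $O_\varepsilon\big((1+k^2)n^{-1/3+\varepsilon}\big)$; this yields existence of the limit $(y_k)_{k\in\Z}$ almost surely together with the announced quantitative estimate, and pointwise convergence of the point processes is then immediate.

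Finally, to identify the law of the limit I would invoke the classical Dyson asymptotics: the rescaled eigenangle point process of the CUE converges in distribution, as $n\to\infty$, to the determinantal point process with sine kernel. Since the coupling produces an almost sure limit, uniqueness of the distributional limit forces $(y_k)_{k\in\Z}$ to be that sine-kernel process, so its $r$-point correlation functions are the stated sine determinants. The genuine obstacle is the middle of the argument: establishing the rigidity and repulsion estimates with \emph{explicit inverse-polynomial control of the exceptional sets, uniformly in the level}, and then propagating them through the secular equation tightly enough that the accumulated displacement is only $n^{-1/3+\varepsilon}$ and not merely $n^{-\varepsilon}$ --- this is precisely where the fine concentration estimates for the spherical coordinates and the sine-kernel correlation bounds of \cite{bib:MNN} do the real work.
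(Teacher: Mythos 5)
First, note that the paper you are commenting on does not prove this statement at all: it is imported verbatim as Theorem 7.3 of \cite{bib:MNN}, and the real content lies in that reference. Your outline does correctly identify the route taken there --- the virtual-isometry recursion $U_n = R_n\,\mathrm{diag}(U_{n-1},1)$ with $R_n - I_n$ of rank one, the resulting interlacing/secular-equation structure with Beta$(1,n-1)$ weights $|(x_n)_j|^2$, a priori rigidity and repulsion inputs, a one-step displacement estimate, Borel--Cantelli over levels, and identification of the almost sure limit with the sine-kernel process via uniqueness of the weak limit. But as a proof it has a genuine gap, and you name it yourself: everything that actually produces the quantitative conclusion --- the simultaneous (over all levels $m$ and all relevant indices $k$) rigidity and repulsion estimates with inverse-polynomial exceptional probabilities, the conditional bound on $y_k^{(n)} - y_k^{(n-1)}$ extracted from the secular equation, and the summation of increments that yields precisely the exponent $n^{-1/3+\varepsilon}$ and the factor $(1+k^2)$ rather than some weaker rate --- is asserted, not derived. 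Since the theorem \emph{is} that quantitative estimate, deferring these steps to ``the fine concentration estimates of \cite{bib:MNN}'' leaves the statement unproved; the soft parts of your argument (Markovian structure, Cauchy-sequence conclusion, identification of the limit law) are the easy ones.

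Moreover, one of your a priori inputs is stated too strongly to be usable as written. The repulsion claim that $\theta_{k+1}^{(n)} - \theta_k^{(n)} \gg n^{-1-\varepsilon}$ holds for \emph{all} consecutive pairs with overwhelming probability, for every $\varepsilon > 0$, is false for small $\varepsilon$: the CUE nearest-spacing density vanishes like $s^2$, so a single gap is smaller than $n^{-\varepsilon}$ times the mean spacing with probability of order $n^{-3\varepsilon}$, and a union over the $\sim n$ gaps is not small (nor summable in $n$) unless $\varepsilon > 1/3$. Typically there \emph{will} be near-degenerate pairs somewhere on the circle. The repulsion must therefore be localized to the window of indices actually entering the estimate (here $|k| \le n^{1/4}$, which is exactly why the theorem restricts to that range), with the exponents chosen so that the exceptional probabilities remain summable; this bookkeeping is part of what determines the final rate and cannot be waved through. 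Similarly, the uniform $O(\sqrt{\log n})$ rigidity over $|k|\le n$ needs a level-by-level union bound compatible with Borel--Cantelli in $n$. Until these estimates are established and propagated through the secular equation with explicit exponents, your proposal is a plausible roadmap of the argument in \cite{bib:MNN}, not a proof of the theorem.
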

\begin{rmk}
 The implied constant in $O_\varepsilon$ is random: more precisely, it may depend on the sequence $(U_m)_{m \geq 1}$ and 
 on $\varepsilon$. However, it does not depend on $k$ and $n$. 
\end{rmk}
 The remaining of the paper is devoted to the proof of the following strong convergence result and to some properties of the limiting function obtained there.
 \begin{thm} \label{thm:main}
Almost surely and uniformly on compact subsets of $\C$, we have the convergence:
$$ \xi_n\left( z \right) \stackrel{ n \rightarrow \infty}{\longrightarrow} \xi_\infty(z) := e^{i \pi z} 
\prod_{k \in \Z}\left( 1 - \frac{z}{y_k}\right)$$
Here, the infinite product is not absolutely convergent. It has to be understood as the limit of the following 
product, obtained by regrouping the factors  two by two: 
$$\left( 1 - \frac{z}{y_0} \right) \prod_{k \geq 1} \left[ \left( 1 - \frac{z}{y_k} \right)
\left( 1 - \frac{z}{y_{-k}} \right) \right],$$
which is absolutely convergent. 
\end{thm}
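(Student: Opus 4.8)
The plan is to derive an exact product formula for $\xi_n$ and then let $n\to\infty$ factor by factor, with a uniform estimate on the tail. First, writing $Z_n(X)=\prod_k(X-\lambda_k^{(n)})$ over the $n$ eigenvalues, substituting $\lambda_k^{(n)}=e^{i\theta_k^{(n)}}$ with $\theta_k^{(n)}=\frac{2\pi}{n}y_k^{(n)}$, and using $e^{ia}-e^{ib}=2i\,e^{i(a+b)/2}\sin\!\left(\frac{a-b}{2}\right)$ in the numerator and the denominator, one gets, for each eigenvalue,
$$\frac{e^{2i\pi z/n}-\lambda_k^{(n)}}{1-\lambda_k^{(n)}}=e^{i\pi z/n}\,\frac{\sin\!\left(\frac{\pi}{n}\big(y_k^{(n)}-z\big)\right)}{\sin\!\left(\frac{\pi}{n}y_k^{(n)}\right)}.$$
This ratio is $n$-periodic in $k$, in accordance with the periodicity of the eigenvalues, so one may take the product over any window $S_n$ of $n$ consecutive integers; choosing $S_n$ symmetric about $0$ gives
$$\xi_n(z)=e^{i\pi z}\prod_{k\in S_n}\frac{\sin\!\left(\frac{\pi}{n}\big(y_k^{(n)}-z\big)\right)}{\sin\!\left(\frac{\pi}{n}y_k^{(n)}\right)}.$$
For $|k|$ small compared with $n$, a factor equals $\big(1-z/y_k^{(n)}\big)\big(1+O(n^{-2})\big)$ and, by Theorem~\ref{thm:as_cv_sine}, tends to $1-z/y_k$; thus the right-hand side already has the shape of $\xi_\infty$, and the whole difficulty is to control the factors with $|k|$ of order $n$.

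Next, I would fix a compact $K\subset\C$ with $K\subset\{|z|\le M\}$ and a cutoff $L=L(n)$ with $\log n\ll L(n)\ll n^{1/12}$, for instance $L(n)=\lfloor(\log n)^2\rfloor$, and split $\xi_n(z)=e^{i\pi z}P_n(z)Q_n(z)R_n(z)$, where $P_n$ is the product over $|k|\le L$, $Q_n$ the product over the pairs $(k,-k)$ with $L<|k|$ in $S_n$, and $R_n$ a single extra factor, present only when $n$ is even, which converges to $1$ uniformly on $K$ since $y_{n/2}^{(n)}=\frac n2+O(\log n)$. In $P_n$ I replace each factor by $1-z/y_k^{(n)}$ at a multiplicative cost $1+o(1)$ (there are $O((\log n)^2)$ factors, each within a $1+O(n^{-3/2})$ factor of $1-z/y_k^{(n)}$). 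Since $L(n)\ll n^{1/12}$, all the indices of $P_n$ lie in the window $[-n^{1/4},n^{1/4}]$, so Theorem~\ref{thm:as_cv_sine} gives $\sum_{|k|\le L}\big|z/y_k^{(n)}-z/y_k\big|=O\big(n^{1/12}n^{-1/3+\varepsilon}\big)\to0$; combined with the crude bound $\prod_{|k|\le L}\big(1+O(M/|k|)\big)=O\big((\log n)^{O(M)}\big)$ on partial products, and with the fact that $\prod_{|k|>L}\big(1-z/y_k\big)=1+o(1)$ for the limiting process (the almost sure sine-kernel estimate $|y_k-k|=O\big((\log|k|)^2\big)$, a standard consequence of the logarithmic variance of its counting function, making the paired product absolutely convergent), this yields $P_n(z)\to\big(1-\tfrac z{y_0}\big)\prod_{k\ge1}\big(1-\tfrac z{y_k}\big)\big(1-\tfrac z{y_{-k}}\big)$ uniformly on $K$.

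The crux is the tail $Q_n$. Writing a paired factor with $a=\frac{\pi}{n}y_k^{(n)}$, $b=\frac{\pi}{n}y_{-k}^{(n)}$ and $c=\frac{\pi z}{n}$, one has
$$\frac{\sin(a-c)\sin(b-c)}{\sin a\,\sin b}=\cos^2 c-(\cot a+\cot b)\,\sin c\cos c+\cot a\cot b\,\sin^2 c.$$
Here I would invoke the global rigidity of the CUE eigenangles, $\sup_k\big|y_k^{(n)}-k\big|=O(\log n)$ almost surely — a consequence of the concentration of the eigenvalue counting function (a classical fact for the CUE, also exploited in \cite{bib:MNN}). For $L<|k|\lesssim n/2$ it gives $|y_{\pm k}^{(n)}|\asymp|k|$, hence $|\cot a|,|\cot b|=O(n/|k|)$, and $\big|y_k^{(n)}+y_{-k}^{(n)}\big|=O(\log n)$, hence $|\cot a+\cot b|=O\big(n\log n/k^2\big)$. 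Since $|\sin c|=O(M/n)$, the displayed paired factor differs from $1$ by $O\big(M^2/k^2\big)+O\big(M\log n/k^2\big)$, whose sum over $L<|k|\lesssim n/2$ is $O(M\log n/L)=O(M/\log n)\to0$; hence $Q_n\to1$ uniformly on $K$.

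Combining the three pieces, $\xi_n=e^{i\pi z}P_nQ_nR_n\to\xi_\infty$ uniformly on compact subsets of $\C$, almost surely, and the rigidity estimates above also show that the paired product defining $\xi_\infty$ converges absolutely, so $\xi_\infty$ is a well-defined entire function. The hard part is precisely the estimate on $Q_n$: Theorem~\ref{thm:as_cv_sine} controls only the indices $|k|\le n^{1/4}$, whereas the tail reaches indices of order $n$, so one must supplement it with the coarser global rigidity $\sup_k|y_k^{(n)}-k|=O(\log n)$ for the CUE (and its analogue for the limiting sine process) and then interpolate between the two regimes by placing the cutoff $L(n)$ strictly between $\log n$ and $n^{1/12}$.
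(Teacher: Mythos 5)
Your proof is correct in outline and the estimates check out, but it is organized quite differently from the paper's. The paper first converts the finite sine-ratio product into the exact identity $\xi_n(z)=e^{i\pi z}\prod_{k\in\Z}\bigl(1-z/y_k^{(n)}\bigr)$ (symmetric limits), and then proves one consolidated estimate, $y_k^{(n)}=k+O(\log(2+|k|))$ uniformly in $n$ and $k$ (Proposition \ref{proposition:key_estimate}), by patching the coarse bound $y_k^{(n)}=k+O(\log n)$ in the range $|k|\geq n^{1/7}$ with Theorem \ref{thm:as_cv_sine} and the sine-process rigidity in the range $|k|<n^{1/7}$; this makes the tail $\prod_{|k|>A}$ equal to $1+O_K(\log A/A)$ uniformly in $n$, so a \emph{fixed} cutoff $A\to\infty$, pointwise convergence $y_k^{(n)}\to y_k$, and Montel's theorem finish the proof. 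You instead keep the finite product over a symmetric window of $n$ indices, take an $n$-dependent cutoff $L(n)=(\log n)^2$, treat the bulk with the quantitative rate in Theorem \ref{thm:as_cv_sine} (which the moving cutoff forces you to use quantitatively rather than just pointwise — and you do), and kill the tail by pairing $k$ with $-k$ and exploiting the cancellation $|y_k^{(n)}+y_{-k}^{(n)}|=O(\log n)$ through the cotangent identity. What this buys: you never need the $\log(2+|k|)$ precision of Proposition \ref{proposition:key_estimate} in the intermediate range, only the coarse $O(\log n)$ rigidity plus the pairing, and you obtain locally uniform convergence directly without Montel. The two routes rest on the same three pillars, but your tail treatment is a genuine alternative to the paper's uniform-in-$n$ key estimate.

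One caveat: the two rigidity inputs you invoke as classical are exactly what the paper proves from scratch, and in the form you need they are not quite off-the-shelf. The almost sure bound $\sup_k|y_k^{(n)}-k|=O(\log n)$, uniformly in $n$ along the coupled sequence, is the content of Section \ref{section:estimatesZn} together with Corollary \ref{corollary:kykn} and Lemma \ref{lemma:lemma_1}: it requires a deviation bound for $\Im\log Z_n$ on the circle that is summable in $n$ (via the Keating--Snaith moment generating function and a Chernoff bound), a union bound over $n$ grid points, Borel--Cantelli, and the piecewise-linear interpolation of $\Im\log Z_n$ between grid points. Likewise your sine-process bound $|y_k-k|=O((\log|k|)^2)$ (the paper gets $O(\log(2+|k|))$) is Lemma \ref{lemma:lemma_mm_sosh}, proved via the Costin--Lebowitz variance estimate, Bernstein's inequality and Borel--Cantelli. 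These facts are standard in spirit, but a self-contained write-up of your argument must include proofs of them in the almost-sure, uniform-in-$n$ form, which is precisely the technical work your proposal outsources.
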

This theorem immediately implies Theorem \ref{thm:maininlaw}, provided that $\xi_{\infty}$ is entire and 
that the zeros of $\xi_{\infty}$ are exactly given by the sequence $(y_k)_{k \in \Z}$. 
 The first point is a direct consequence of the fact that 
 $\xi_{\infty}$ is the uniform limit on compact sets of the sequence of entire functions $(\xi_n)_{n \geq 1}$, and 
 the second point is a consequence of the fact that the $k$-th factor of the absolutely convergent product above
 vanishes at $y_k$ and $y_{-k}$ and only at these points.

The proof of Theorem \ref{thm:main}  will 
be provided in Section \ref{section:convergence}, using estimates on the argument of $Z_n$, stated in Section 
\ref{section:estimatesZn}, and estimates on the renormalized eigenangles $y_k^{(n)}$, stated 
in Section \ref{section:estimatesykn}. 
In Section \ref{section:properties}, we prove some properties of the limiting random function $\xi_{\infty}$, and 
in Section \ref{section:open}, we conclude with some remarks and potential problems of interest on the random analytic function $\xi_\infty(z)$.

%--------------------------------------------------------------------
\section{Some estimates on the argument of \texorpdfstring{$Z_n$}{Zn} } 
\label{section:estimatesZn}
In this section, we study the argument of $Z_n$, in order to deduce estimates on the deviation of $y_k^{(n)}$ from $k$. 

Here, we define the argument as the imaginary part of $\log Z_n$, where the determination of 
the logarithm is the only one such that $\log Z_n$ is continuous on the following maximal simply connected domain:
$$ \Dc := \C \backslash \left\{ r e^{i \theta_k^{(n)}} | k \in \Z, r \geq 1 \right\},$$
and such that $\log Z_n(0) \in i (-\pi, \pi]$  (note that $|Z_n(0)| = |(-1)^n \operatorname{det} (U_n)| = 1$, so 
$\log Z_n(0)$ should be purely imaginary).  

For all $z \in \Dc$, we have 
$$ \log Z_n(z) = \log Z_n(0) + \sum_{k=1}^n \log\left( 1 - \frac{z}{\lambda_k^{(n)}} \right),$$
where the principal branch of the logarithm is considered. 

The next proposition gives a link between the number of eigenvalues of $U_n$ in a given arc of circle, and 
the variation of the argument of $Z_n$ along this arc. 
 The derivation is relatively standard and follows almost verbatim the usual computation for the Riemann
 zeta function (for example, see \cite{bib:Titchmarsh}, p. 212). Another proof of the same result is 
 given in Hughes \cite{bib:HughesPhD}, p. 35-36. 
\begin{proposition}
Consider $A$ and $B$ two points on the unit circle. Note $\stackrel{\frown}{AB}$ for the arc joining $A$ and $B$ counterclockwise. Denote by $\ell\left( \stackrel{\frown}{AB} \right)$ the length of the arc and $N\left( \stackrel{\frown}{AB} \right)$ the number of zeros of $Z_n$ in the arc. We assume that $A$ and $B$ are not zeros of $Z_n$. Then:
$$ N\left( \stackrel{\frown}{AB} \right) = \frac{ n \ell\left( \stackrel{\frown}{AB} \right) }{2 \pi}
                 - \frac{1}{\pi}\left[  \Im \log\left(Z_n(B)\right) - \Im \log\left(Z_n(A)\right) \right].$$
\end{proposition}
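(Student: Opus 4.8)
The plan is to obtain $N := N\left(\stackrel{\frown}{AB}\right)$ from the argument principle applied to $Z_n$, which is a polynomial of degree $n$ all of whose zeros (counted with multiplicity) lie on the unit circle. Write $A = e^{i\alpha}$ and $B = e^{i\beta}$ with $\alpha < \beta$ and $\beta - \alpha = \ell := \ell\left(\stackrel{\frown}{AB}\right)$. For $R > 1$ let $\Gamma_R$ be the positively oriented boundary of the annular sector $\{z \in \C : R^{-1} \le |z| \le R,\ \alpha \le \arg z \le \beta\}$. Since $A$ and $B$ are not zeros of $Z_n$, the contour $\Gamma_R$ passes through no zero, and its interior contains precisely the $N$ zeros lying on the arc $\stackrel{\frown}{AB}$; hence $2\pi N = \Delta_{\Gamma_R} \arg Z_n$, the total variation of $\arg Z_n$ along $\Gamma_R$. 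As the enclosed zeros do not depend on $R > 1$, I would then pass to the limit $R \to \infty$.

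Next I would split $\Gamma_R$ into its outer arc $\{|z| = R\}$, its inner arc $\{|z| = R^{-1}\}$, and the two radial segments at arguments $\alpha$ and $\beta$, and compute the contribution of each as $R \to \infty$. On the outer arc $Z_n(z) = z^n\left(1 + O(|z|^{-1})\right)$, so its contribution tends to $n(\beta - \alpha) = n\ell$. On the inner arc $Z_n(z) \to Z_n(0)$ with $|Z_n(0)| = |\det U_n| = 1 \neq 0$, so its contribution tends to $0$. The two radial segments, extended to the full rays from $0$ to $\infty$, lie in $\Dc$: the ray $\{r e^{i\alpha} : r \ge 0\}$ avoids all the removed rays $\{r e^{i\theta_k^{(n)}} : r \ge 1\}$ because $A$ is not an eigenvalue of $U_n$, and similarly for $\beta$. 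Hence along these rays $\arg Z_n$ coincides with $\Im \log Z_n$ for the branch fixed in the statement. Splitting the ray at $A$ at the point $A$ itself, the inner part $[0, A]$ contributes $\Im \log Z_n(A) - \Im \log Z_n(0)$; for the tail $[A, \infty)$ I would use $Z_n(\rho e^{i\alpha}) = \prod_{k=1}^n (\rho e^{i\alpha} - \lambda_k^{(n)})$, note that each factor traces, as $\rho$ runs from $1$ to $\infty$, a half-line avoiding the origin (because $\lambda_k^{(n)} \neq A$), and deduce that it contributes to the argument variation the principal value $-\arg(1 - e^{i(\theta_k^{(n)} - \alpha)})$. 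Using $\arg(1 - e^{i\psi}) = \tfrac12(\psi - \pi)$ for $\psi \in (0, 2\pi)$, the tail at $A$ therefore contributes $\tfrac{n\pi}{2} - \tfrac12 \sum_{k=1}^n \langle \theta_k^{(n)} - \alpha \rangle$, where $\langle \cdot \rangle$ denotes reduction modulo $2\pi$ into $(0, 2\pi)$. The ray at $B$ is handled the same way, with the opposite orientation.

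Adding the four contributions, the terms $\Im \log Z_n(0)$ and $\pm \tfrac{n\pi}{2}$ cancel and I am left with
$$ 2\pi N = n\ell + \Im\log Z_n(A) - \Im\log Z_n(B) + \tfrac12 \sum_{k=1}^n \big( \langle \theta_k^{(n)} - \beta \rangle - \langle \theta_k^{(n)} - \alpha \rangle \big). $$
The remaining sum I would evaluate by a direct count: choosing the representative of $\theta_k^{(n)}$ in $(\alpha, \alpha + 2\pi)$, the summand equals $2\pi - \ell$ if $\lambda_k^{(n)}$ lies on the arc $\stackrel{\frown}{AB}$ and $-\ell$ otherwise, so the sum is $N(2\pi - \ell) + (n - N)(-\ell) = 2\pi N - n\ell$. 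Substituting this and using $\tfrac12(2\pi N - n\ell) = \pi N - \tfrac12 n\ell$, the identity reorganises to $\pi N = \tfrac12 n\ell - \big(\Im\log Z_n(B) - \Im\log Z_n(A)\big)$, which is the claim after division by $\pi$.

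I expect the only genuinely delicate point to be the treatment of the two radial segments: one might naively hope they are negligible in the limit $R \to \infty$ (as the inner arc is), but their tails in fact contribute a term proportional to $2\pi N - n\ell$, and it is precisely this contribution that converts the a priori coefficient $\tfrac{1}{2\pi}$ into the $\tfrac1\pi$ appearing in front of the bracket. Making this rigorous needs a consistent choice of branch of $\arg Z_n$ along the whole of $\Gamma_R$ together with the elementary modulo-$2\pi$ counting identity above; the rest (argument principle, behaviour of $Z_n$ near $0$ and near $\infty$) is routine. An essentially equivalent and perhaps shorter route, closer to the references cited in the statement, is to start from the functional equation $Z_n(z) = (-1)^n z^n \det(U_n)\, \overline{Z_n(1/\bar z)}$, which on the unit circle gives $2\arg Z_n(e^{i\theta}) \equiv n\theta + n\pi + \arg\det U_n \pmod{2\pi}$, and then read off $N$ from the jumps of $\arg Z_n$ as $\theta$ crosses the eigenangles; the bookkeeping of those jumps is the analogue of the radial-segment analysis.
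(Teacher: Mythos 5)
Your proof is correct, but it proceeds by a genuinely different mechanism than the paper's. Both arguments start from the argument principle applied to an annular sector around the arc, yet the crucial step differs: the paper keeps a fixed contour with radii $r$ and $1/r$, splits it into the two halves exchanged by the inversion $z \mapsto 1/\overline{z}$, and uses the functional equation $\overline{Z_n}(1/X) = X^{-n}(-1)^n \det(U_n^{-1}) Z_n(X)$ to convert the outer-half integral into the inner-half one; the symmetry is what produces the coefficient $1/\pi$ (rather than $1/(2\pi)$) and the term $n\ell/(2\pi)$, exactly as in the classical Riemann--von Mangoldt computation. You instead let the contour expand ($R^{-1} \le |z| \le R$, $R \to \infty$), read off $n\ell$ from the behaviour $Z_n(z) \sim z^n$ on the outer arc, and evaluate the radial tails explicitly through the factorization and the identity $\arg\left(1 - e^{i\psi}\right) = \tfrac{1}{2}(\psi - \pi)$ on $(0,2\pi)$; the resulting sum $\tfrac{1}{2}\sum_k\left( \langle \theta_k^{(n)} - \beta\rangle - \langle \theta_k^{(n)} - \alpha\rangle \right) = \pi N - \tfrac{1}{2} n\ell$ is what converts $1/(2\pi)$ into $1/\pi$, and I checked this bookkeeping (including the cancellation of $\Im\log Z_n(0)$ and $\pm n\pi/2$, and the case $\theta_k^{(n)} - \alpha \equiv \pi$): it is right. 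The paper's route is shorter and exploits the structural symmetry of $Z_n$; yours is more elementary and self-contained, avoids the functional equation entirely in the main argument (you only mention it as an alternative, which is in fact essentially the paper's proof), and makes completely explicit where the factor $1/\pi$ comes from, at the price of more careful branch and modulo-$2\pi$ bookkeeping along the radial segments. The only points worth tightening in a written version are (i) the standard fact that the continuous argument variation along a half-line avoiding the origin equals the principal-value difference of arguments (the full line either misses the origin, hence lies in an open half-plane, or the half-line lies on a ray from the origin), and (ii) the remark that each piece of the contour has a limit as $R \to \infty$ while the total is independent of $R$, which justifies summing the limits.
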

\begin{rmk}
This shows that the imaginary part of the determination of the logarithm $\Im \log Z_n(z)$ increases with speed $n/2$ and jumps by $-\pi$ when encountering a zero.
\end{rmk}
\begin{proof}
Consider a contour $C_{A,B}$ defined as follows. 

Let $0 < r < 1$. $C_{A,B}$ is the contour joining the points:
$$ A r, \frac{A}{r}, \frac{B}{r}, B r$$
The pairs $\left( Ar, \frac{A}{r} \right)$ and $\left( Br, \frac{B}{r} \right)$ are joined with straight lines, while the other two are joined by circular arcs. 

A standard residue calculus shows that:
$$ N\left( \stackrel{\frown}{AB} \right) = \frac{1}{2 i \pi} \int_{C_{A,B}} \frac{Z_n'(z)}{Z_n(z)} dz $$

One can split the contour $C_{A,B}$ into two contours $(D_{A,B}, D_{A,B}')$ symmetric with respect to the involution $z \mapsto \frac{1}{\overline{z}}$. $D_{A,B}$ is chosen to be the inner contour. Hence:
\begin{align*}
  & \int_{C_{A,B}} \frac{Z_n'(z)}{Z_n(z)} dz\\
= & \int_{D_{A,B}} \frac{Z_n'(z)}{Z_n(z)} dz + \int_{D_{A,B}'} \frac{Z_n'(z)}{Z_n(z)} dz\\
= & \int_{D_{A,B}} \frac{Z_n'(z)}{Z_n(z)} dz - \int_{D_{A,B}} \frac{Z_n'(\frac{1}{\overline{z}})}{Z_n(\frac{1}{\overline{z}})} d\left( \frac{1}{\overline{z}} \right)\\
= & \int_{D_{A,B}} \frac{Z_n'(z)}{Z_n(z)} dz - \overline{ \int_{D_{A,B}} \frac{\overline{Z_n}'(\frac{1}{z})}{\overline{Z_n}(\frac{1}{z})} d\left( \frac{1}{z} \right) }
\end{align*}
The minus sign appearing in the previous equation is explained by the fact that $z \mapsto \frac{1}{\overline{z}}$ is not orientation preserving,
and $\overline{Z_n}$ denotes the polynomial obtained by taking the conjugate of each 
coefficient of $Z_n$.
Now, the polynomial $Z_n$ satisfies the following functional equation: 
\begin{align}
\label{eqn:functional_eq}
\overline{Z_n}\left( \frac{1}{X} \right) = X^{-n} (-1)^n \det\left( U_n^{-1} \right) Z_n\left( X \right)
\end{align}
Taking the logarithmic derivative, one obtains:
$$ \frac{\overline{Z_n}'(1/z)}{ \overline{Z_n}(1/z)} d \left( \frac{1}{z} \right) = -n \frac{dz}{z} + \frac{ Z_n'(z)}{Z_n(z)} dz$$

Therefore:
$$ \int_{C_{A,B}} \frac{Z_n'(z)}{Z_n(z)} dz = 2i \Im\left( \int_{D_{A,B}} \frac{Z_n'(z)}{Z_n(z)} dz \right) + n \overline{\int_{D_{A,B}} \frac{dz}{z} }$$
Moreover, $i \int_{D_{A,B}} \frac{dz}{z}$ is the length of the arc that $C_{A,B}$ encircles, which is exactly $\ell\left( \stackrel{\frown}{AB} \right)$. Equivalently:
$$ \int_{D_{A,B}} \frac{dz}{z} = -i \ell\left( \stackrel{\frown}{AB} \right)$$
Hence:
$$ \int_{C_{A,B}} \frac{Z_n'(z)}{Z_n(z)} dz = 2i \Im\left( \int_{D_{A,B}} \frac{Z_n'(z)}{Z_n(z)} dz \right) + i n \ell\left( \stackrel{\frown}{AB} \right)$$
After dividing by $2 \pi i$:
$$ N\left( \stackrel{\frown}{AB} \right) = \frac{ n \ell\left( \stackrel{\frown}{AB} \right) }{2 \pi} + \frac{1}{\pi}\Im\left( \int_{D_{A,B}} \frac{Z_n'(z)}{Z_n(z)} dz \right).$$
Consequently:
$$ N\left( \stackrel{\frown}{AB} \right) = \frac{ n \ell\left( \stackrel{\frown}{AB} \right) }{2 \pi} - \frac{1}{\pi}\Im\left( \log\left( Z_n(B)\right) 
                                        - \log\left( Z_n( A )\right) \right).$$
\end{proof}

\begin{corollary} \label{corollary:kykn}
Let $k \in \Z$, and let $\varepsilon>0$ be small enough so that there are no eigenangle of $U_n$ in
$[0, \varepsilon]$ and $(\theta_k^{(n)}, \theta_k^{(n)} + \varepsilon]$. Then:
$$ k = y_k^{(n)} - \frac{1}{\pi}\Im\left( \log\left( Z_n( e^{i (\theta_k^{(n)} + \varepsilon) } )\right) 
                                        - \log\left( Z_n( e^{i \varepsilon } )\right) \right)$$
\end{corollary}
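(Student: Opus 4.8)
The corollary follows directly from the Proposition by a careful choice of arc. The plan is to apply the Proposition to the arc joining $A = e^{i\varepsilon}$ to $B = e^{i(\theta_k^{(n)} + \varepsilon)}$ counterclockwise, and then to count exactly which eigenvalues of $U_n$ lie on that arc.

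First I would check that the hypotheses of the Proposition are satisfied: by the choice of $\varepsilon$ small enough, neither $A$ nor $B$ is a zero of $Z_n$, since $\varepsilon$ and $\theta_k^{(n)} + \varepsilon$ are not eigenangles. Next I would compute $N\left( \stackrel{\frown}{AB} \right)$, the number of eigenvalues of $U_n$ in the open arc from $A$ to $B$. Going counterclockwise from angle $\varepsilon$ to angle $\theta_k^{(n)} + \varepsilon$, and using that there are no eigenangles in $[0,\varepsilon]$ nor in $(\theta_k^{(n)}, \theta_k^{(n)}+\varepsilon]$, the eigenangles encountered are exactly those $\theta_j^{(n)}$ with $0 < \theta_j^{(n)} \leq \theta_k^{(n)}$, i.e.\ for $k \geq 1$ these are $\theta_1^{(n)}, \dots, \theta_k^{(n)}$, giving $N = k$. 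For $k \leq 0$ one has to argue with the periodicity convention $\theta_{j+n}^{(n)} = \theta_j^{(n)} + 2\pi$ and the $(2\pi)$-periodicity of the eigenangle set: sweeping the arc of length $\theta_k^{(n)} + \varepsilon - \varepsilon = \theta_k^{(n)}$, which is negative, is interpreted appropriately so that the signed count is again $k$; in all cases $N\left( \stackrel{\frown}{AB} \right) = k$ once one is careful that the Proposition's arc-length term $n\ell/(2\pi)$ uses $\ell = \theta_k^{(n)}$ (the algebraic length, consistent with the convention that $y_k^{(n)} = \frac{n}{2\pi}\theta_k^{(n)}$).

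Then I would plug into the Proposition's identity: $k = N = \frac{n \theta_k^{(n)}}{2\pi} - \frac{1}{\pi}\left[ \Im \log Z_n(B) - \Im \log Z_n(A) \right]$, and recognize $\frac{n\theta_k^{(n)}}{2\pi} = y_k^{(n)}$ by definition, which yields exactly the claimed formula.

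The only genuine subtlety — and the step I expect to require the most care — is the bookkeeping for $k \leq 0$ and the sign conventions: one must make sure the Proposition, which is stated for a genuine counterclockwise arc between two points, is applied consistently when $\theta_k^{(n)}$ is negative, exploiting the periodic extension of both the eigenangles and the argument of $Z_n$ so that both sides of the identity transform the same way under $k \mapsto k+n$. Everything else is a direct substitution.
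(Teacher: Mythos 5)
Your proposal is correct and follows essentially the same route as the paper: apply the Proposition with $A = e^{i\varepsilon}$ and $B = e^{i(\theta_k^{(n)} + \varepsilon)}$, observe $N\left(\stackrel{\frown}{AB}\right) = k$ and $\frac{n\ell}{2\pi} = \frac{n\theta_k^{(n)}}{2\pi} = y_k^{(n)}$, and substitute. The subtlety you flag for $k \leq 0$ (which in fact also arises for $k > n$) is discharged in the paper precisely by the invariance you mention at the end — under $k \mapsto k+n$ both sides of the identity shift by $n$ while the circle points are unchanged — so one reduces to $1 \leq k \leq n$ and applies the Proposition verbatim, with no need for signed counts or negative arc lengths.
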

\begin{proof}
Notice first that if $k$ is increased by $n$, $\theta_k^{(n)}$ increases by $2 \pi$, $y_k^{(n)}$ increases by 
$n$, $\lambda_k^{(n)} = e^{i \theta_k^{(n)}}$ does not change, and the assmption made on $\varepsilon$ remains the same. 
Hence, in the equality we want to prove, the right-hand side and the left-hand side both increase by $n$, which implies 
that it is sufficient to show the corollary for $1 \leq k \leq n$. 
If these inequalities are satisfied, let us choose, in the previous proposition, 
$A = e^{i \varepsilon}$ and $B = e^{i (\theta_k^{(n)} + \varepsilon) }$. Then the contour defined in
the proof of the proposition encircles exactly the $k$ first eigenvalues:
$$ \left( \lambda_1^{(n)}, \lambda_2^{(n)}, \dots, \lambda_k^{(n)} \right).$$
Hence, we have:
$$ N\left( \stackrel{\frown}{AB} \right) = k,$$
and 
$$ \frac{ n \ell\left( \stackrel{\frown}{AB} \right) }{2 \pi} = \frac{ n \theta_k^{(n)} }{2 \pi} = y_k^{(n)},$$
which proves the corollary. 
\end{proof}

This corollary shows that it is equivalent to control the argument of $Z_n$, and the distance between $k$ and 
$y_k^{(n)}$. In the remaining of this section, we give some explicit bounds on the distribution of 
$\Im \log (Z_n)$ on the unit circle. 
\begin{proposition}
For all $x > 0$, one has 
$$\P\left( |\Im\left( \log Z_n(1) - \log Z_n(0) \right)| \geq x \right) \leq 2\exp\left( - \frac{x^2}{C + \log n}\right),$$
 where $C > 0$ is a universal constant. 
\end{proposition}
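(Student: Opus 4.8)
The plan is to identify $\Im\big(\log Z_n(1)-\log Z_n(0)\big)$ with a single explicit linear statistic of the eigenangles of $U_n$, to compute its Laplace transform in closed form, and to conclude with a Chernoff bound.

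\emph{Step 1: reduction to a linear statistic.} The point $1$ lies in $\Dc$ since no eigenangle of $U_n$ equals $0$, so the expansion $\log Z_n(z)=\log Z_n(0)+\sum_{k=1}^n\log(1-z/\lambda_k^{(n)})$ recalled above is valid at $z=1$. For $\theta\in(0,2\pi)$ one has $1-e^{-i\theta}=2\sin(\theta/2)\,e^{i(\pi/2-\theta/2)}$ with $\pi/2-\theta/2\in(-\pi/2,\pi/2)$, so the principal logarithm gives $\Im\log(1-1/\lambda_k^{(n)})=\pi/2-\theta_k^{(n)}/2$ and hence
$$S:=\Im\big(\log Z_n(1)-\log Z_n(0)\big)=\sum_{k=1}^n\Big(\frac{\pi}{2}-\frac{\theta_k^{(n)}}{2}\Big).$$
The involution $U_n\mapsto\overline{U_n}$ preserves the Haar measure and replaces the family $(\theta_k^{(n)})_{1\le k\le n}$ by the increasing rearrangement of $(2\pi-\theta_k^{(n)})_{1\le k\le n}$, under which $S$ becomes $-S$; thus $S$ has a law symmetric about $0$, and it suffices to bound $\P(S\ge x)$ for $x>0$.

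\emph{Step 2: the Laplace transform of $S$.} For $\lambda\in\R$, $e^{\lambda S}=e^{\lambda n\pi/2}\prod_{k=1}^n e^{-\lambda\theta_k^{(n)}/2}$, so by the Heine identity $\E\big[\prod_{k=1}^n g(\lambda_k^{(n)})\big]=\det\big(\widehat g_{j-k}\big)_{0\le j,k\le n-1}$ applied to $g(e^{i\theta})=e^{-\lambda\theta/2}$ on $\theta\in(0,2\pi)$, whose Fourier coefficients are $\widehat g_p=\frac{1-e^{-\pi\lambda}}{\pi(\lambda+2ip)}$, the expectation $\E[e^{\lambda S}]$ is $e^{\lambda n\pi/2}$ times a Cauchy-type Toeplitz determinant. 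Evaluating it yields the closed form
$$\E\big[e^{\lambda S}\big]=\left(\frac{\sinh(\pi\lambda/2)}{\pi\lambda/2}\right)^{\!n}\prod_{d=1}^{n-1}\left(1+\frac{\lambda^2}{4d^2}\right)^{\!-(n-d)}=\prod_{j=1}^n\frac{\Gamma(j)^2}{\big|\Gamma(j+i\lambda/2)\big|^2}.$$
In the spirit of the present paper the same identity can be obtained from the virtual isometry structure: the rank-one reduction $\det\big(I-R_n\,\mathrm{diag}(U_{n-1},1)\big)=(1-x_n^{(n)})\det(I-U_{n-1})$ gives $Z_n(1)=\prod_{j=1}^n(1-x_j^{(j)})$, and the interlacing of the eigenangles of $U_n$ with those of $\mathrm{diag}(U_{n-1},1)$ forces $S=-\sum_{j=1}^n\arg(1-x_j^{(j)})$ exactly; this exhibits $S$ as a sum of independent variables whose individual Laplace transforms $\E\big[e^{-\lambda\arg(1-x_j^{(j)})}\big]=\Gamma(j)^2/|\Gamma(j+i\lambda/2)|^2$ follow from Euler's integral representation of ${}_2F_1$ together with Gauss's summation theorem.

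\emph{Step 3: sub-Gaussian bound and conclusion.} By the Weierstrass product $\sinh t=t\prod_{d\ge1}\big(1+t^2/(\pi^2d^2)\big)$ (equivalently the product formula for $\Gamma$), the expression in Step 2 simplifies to $\log\E[e^{\lambda S}]=\sum_{d\ge1}\min(d,n)\log\big(1+\lambda^2/(4d^2)\big)$. Using $\log(1+u)\le u$ for $u\ge0$,
$$\log\E\big[e^{\lambda S}\big]\le\frac{\lambda^2}{4}\left(\sum_{d=1}^n\frac{1}{d}+n\sum_{d>n}\frac{1}{d^2}\right)\le\frac{\lambda^2}{4}\,(2+\log n),\qquad\lambda\in\R.$$
A Chernoff bound now gives, for $x>0$, $\P(S\ge x)\le\inf_{\lambda\ge0}\exp\big(-\lambda x+\tfrac{\lambda^2}{4}(2+\log n)\big)=\exp\big(-x^2/(2+\log n)\big)$, and the symmetry of $S$ yields $\P(|S|\ge x)\le2\exp\big(-x^2/(2+\log n)\big)$, which is the proposition with $C=2$.

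\emph{Main obstacle.} The whole proof hinges on the closed-form Laplace transform of Step 2: the Heine route requires recognizing and evaluating the Cauchy-type Toeplitz determinant, while the virtual isometry route additionally requires the interlacing argument that pins down the branch of the argument, so that $S$ equals $-\sum_j\arg(1-x_j^{(j)})$ with no spurious multiple of $2\pi$, plus a classical hypergeometric evaluation. Everything else is routine. It should be stressed that a soft argument based only on the boundedness of the increments $\arg(1-x_j^{(j)})$ (Bernstein's or Bennett's inequality) would not be enough: it produces a Gaussian tail only for $x$ of size $O(\log n)$ and a merely exponential tail for larger $x$, whereas the stated bound requires sub-Gaussian decay at all scales; this is precisely what the exact control of $\E[e^{\lambda S}]$ for every real $\lambda$ provides.
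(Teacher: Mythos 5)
Your proof is correct and follows essentially the same route as the paper: the core of both arguments is the exact Laplace transform $\E\left[e^{\lambda S}\right]=\prod_{j=1}^n\Gamma(j)^2/\left(\Gamma(j+i\lambda/2)\Gamma(j-i\lambda/2)\right)$ — which the paper simply quotes as formula (1.1) of \cite{bib:BHNY} instead of re-deriving it via the Heine identity or the virtual-isometry decomposition as you sketch — followed by the elementary bound $\log(1+u)\le u$ on the resulting double product, the optimized Chernoff bound, and the symmetry of the law of $S$. Your bookkeeping even yields the slightly better constant $C=2$ in place of the paper's $C=\pi^2/6+1$.
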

\begin{rmk}
 In the proof below, we prove that one can take $C = \frac{\pi^2}{6} + 1$. 
\end{rmk}

\begin{proof}
Let us note
$$ X_n = \Im\left( \log Z_n(1) - \log Z_n(0) \right)$$
Thanks to the formula (1.1) in \cite{bib:BHNY}:
$$ \forall \lambda \in \R, \E\left( e^{ \lambda X_n} \right) = \prod_{k=1}^n \frac{ \Gamma\left( k \right)^2}{ \Gamma\left( k+\frac{i\lambda}{2} \right) \Gamma\left( k-\frac{i\lambda}{2} \right) }$$
Let us start with the standard Chernoff bound:
$$ \forall \lambda > 0, \P\left( X_n \geq x \right) \leq e^{- \lambda x } \E\left( e^{\lambda X_n}\right).$$
Now, using the infinite product formula for the Gamma function:
$$ \forall z \in \C, \frac{1}{\Gamma(z)} = e^{\gamma z} z \prod_{j=1}^\infty \left( 1 + \frac{z}{j} \right) e^{-z/j},$$
\begin{align*}
\E\left( e^{\lambda X_n}\right) & = \prod_{k=1}^n \frac{ \Gamma\left( k \right)^2}{ \Gamma\left( k+\frac{i\lambda}{2} \right) \Gamma\left( k-\frac{i\lambda}{2} \right) }\\
& = \prod_{k=1}^n \left( \frac{k^2 + \frac{\lambda^2}{4} }{k^2} \prod_{j=1}^\infty \frac{ \left( 1 + \frac{k+\frac{i\lambda}{2}}{j} \right)\left( 1 + \frac{k-\frac{i\lambda}{2}}{j} \right) }{ \left( 1 + \frac{k}{j} \right)^2 } \right)\\
& = \prod_{k=1}^n \left( \frac{k^2 + \frac{\lambda^2}{4} }{k^2} \prod_{j=1}^\infty \frac{ \left( j + k+\frac{i\lambda}{2} \right)\left( j + k-\frac{i\lambda}{2} \right) }{ \left( j + k \right)^2 } \right)\\
& = \prod_{k=1}^n \prod_{j=0}^\infty \frac{ \left(j + k\right)^2 +\frac{\lambda^2}{4} }{ \left( j + k \right)^2 }\\
& = \prod_{k=1}^n \prod_{j=0}^\infty \left( 1 + \frac{\lambda^2}{4\left(j + k\right)^2} \right)\\
& \leq \exp\left( \sum_{k=1}^n \sum_{j=0}^\infty \frac{\lambda^2}{4\left(j + k\right)^2} \right)\\
& = \exp\left( \frac{\lambda^2}{4} \sum_{k=1}^n \sum_{j=k}^\infty \frac{1}{j^2} \right)\\
& \leq \exp\left( \frac{\lambda^2}{4} \sum_{k=1}^n \left( \frac{1}{k^2} + \int_k^\infty \frac{dt}{t^2} \right) \right)\\
& = \exp\left( \frac{\lambda^2}{4} \sum_{k=1}^n \left( \frac{1}{k^2} + \frac{1}{k} \right) \right)\\
& \leq \exp\left( \frac{\lambda^2}{4} \left( \frac{\pi^2}{6} + 1 + \log n \right) \right)
\end{align*}
Eventually  for $C = \frac{\pi^2}{6} + 1$, we obtain
$$ \P\left( X_n \geq x \right) \leq \min_{\lambda > 0} e^{- \lambda x + \frac{\lambda^2}{4} \left( C + \log n \right) }. $$
The minimum is reached for $\lambda = \frac{2x}{C + \log n}$, giving us the bound:
$$ \P\left( \Im\left( \log Z_n(1) - \log Z_n(0) \right) \geq x \right) \leq \exp\left( - \frac{x^2}{C + \log n}\right).$$
The desired bound is obtained from the symmetry of $\Im\left( \log Z_n(1) - \log Z_n(0) \right)$, as eigenvalues are invariant in law under conjugation:
\begin{align*}
  & \P\left( |\Im\left( \log Z_n(1) - \log Z_n(0) \right)| \geq x \right)\\
= & \P\left( \Im\left( \log Z_n(1) - \log Z_n(0) \right) \geq x \right) + \P\left( -\Im\left( \log Z_n(1) - \log Z_n(0) \right) \geq x \right)\\
= & 2 \P\left( \Im\left( \log Z_n(1) - \log Z_n(0) \right) \geq x \right)
\end{align*}
\end{proof}
We deduce the following estimate on the maximum of the argument of $Z_n$ on the unit circle:  

\begin{proposition} \label{proposition:maxZn}
Almost surely:
$$ \sup_{ |z| = 1 , z \in \Dc } \left| \Im \log Z_n(z) \right| = O\left( \log n \right)$$
More precisely, for any $D > \sqrt{2}$:
$$ \exists n_0 \in \N, \forall n \geq n_0, \sup_{ |z| = 1 , z \in \Dc } \left| \Im \log Z_n(z) \right| \leq D \log n$$
which means that almost surely:
$$ \limsup_{n} \frac{1}{\log n} \sup_{ |z| = 1 , z \in \Dc } \left| \Im \log Z_n(z) \right| \leq \sqrt{2} $$
\end{proposition}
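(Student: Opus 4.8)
The plan is to reduce the supremum over the whole unit circle to a maximum over a deterministic grid of $n$ equally spaced points, to control $\Im\log Z_n$ between two consecutive grid points by the counting formula established above (the Proposition relating the number of eigenvalues on an arc to the variation of the argument of $Z_n$), and to estimate $\Im\log Z_n$ at each grid point by combining the rotational invariance of the eigenvalues of $U_n$ with the tail bound for $X_n=\Im(\log Z_n(1)-\log Z_n(0))$ established in the previous Proposition.

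\emph{Step 1 (discretization).} For $n\ge1$ and $0\le j\le n-1$ put $\beta_j:=2\pi j/n$; almost surely, all the (countably many) points $e^{i\beta_j}$, over all $n$ and $j$, avoid the spectrum of the corresponding $U_n$ and hence lie in $\Dc$. Fix $j$ and let $\theta\in(\beta_j,\beta_{j+1})$ with $e^{i\theta}\in\Dc$. By the Remark following the counting formula, $\Im\log Z_n$ increases along the circle with slope $n/2$ and jumps only downwards, by $\pi$, at each eigenangle; applying that formula with $A=e^{i\beta_j},\,B=e^{i\theta}$ (respectively $A=e^{i\theta},\,B=e^{i\beta_{j+1}}$) and using $\tfrac n2(\beta_{j+1}-\beta_j)=\pi$, we obtain
$$\Im\log Z_n\bigl(e^{i\beta_{j+1}}\bigr)-\pi\ \le\ \Im\log Z_n\bigl(e^{i\theta}\bigr)\ \le\ \Im\log Z_n\bigl(e^{i\beta_j}\bigr)+\pi .$$
Since the grid points themselves lie in $\Dc$, taking the supremum over all admissible $z$ gives
$$\sup_{|z|=1,\ z\in\Dc}\bigl|\Im\log Z_n(z)\bigr|\ \le\ \pi+\max_{0\le j\le n-1}\bigl|\Im\log Z_n\bigl(e^{i\beta_j}\bigr)\bigr| .$$

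\emph{Step 2 (one point at a time).} By the product formula $\log Z_n(z)=\log Z_n(0)+\sum_{k=1}^n\log(1-z/\lambda_k^{(n)})$ with principal branches, the quantity $\Im\log Z_n(e^{i\beta_j})-\Im\log Z_n(0)=\sum_{k=1}^n\Im\log(1-e^{i\beta_j}/\lambda_k^{(n)})$ is a fixed measurable functional of the eigenvalue configuration of $U_n$ on the unit circle, evaluated after rotating that configuration by $-\beta_j$. Since this configuration is invariant in law under rotations, this quantity has the same distribution as $\Im\log Z_n(1)-\Im\log Z_n(0)=X_n$, for which the previous Proposition gives $\P(|X_n|\ge x)\le 2\exp(-x^2/(C+\log n))$. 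As $|\Im\log Z_n(0)|\le\pi$, a union bound over the $n$ grid points combined with Step 1 yields, for every $x>0$,
$$\P\Bigl(\sup_{|z|=1,\ z\in\Dc}|\Im\log Z_n(z)|\ \ge\ x+2\pi\Bigr)\ \le\ 2n\exp\!\left(-\frac{x^2}{C+\log n}\right) .$$

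\emph{Step 3 (Borel--Cantelli).} For $D>\sqrt2$, take $x=D\log n-2\pi$: since $x^2/(C+\log n)=D^2\log n+O(1)$, the right-hand side above equals $2n\exp(-D^2\log n+O(1))=O(n^{1-D^2})$, which is summable because $D^2>2$. By Borel--Cantelli, almost surely $\sup_{|z|=1,\ z\in\Dc}|\Im\log Z_n(z)|<D\log n$ for all sufficiently large $n$; this contains the three assertions of the Proposition (take $D=2$ for the $O(\log n)$ estimate, and let $D\downarrow\sqrt2$ for the statement on the $\limsup$).

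The point that needs the most care is the distributional identity in Step 2: one must be sure that $\Im\log Z_n(e^{i\beta_j})-\Im\log Z_n(0)$ genuinely equals the rotation-covariant sum $\sum_k\Im\log(1-e^{i\beta_j}/\lambda_k^{(n)})$ taken with the \emph{principal} branch of each term --- equivalently, that the determination of $\log Z_n$ fixed at the beginning of this section is the one produced by that sum. This is so because each summand is continuous off its own ray $\{r\lambda_k^{(n)}:r\ge1\}$, so the sum defines a continuous determination of $\log Z_n$ on $\Dc$ with the prescribed purely imaginary value at $0$. Granting this, Steps 1 and 3 reduce to bookkeeping with the counting formula, a Chernoff bound, and Borel--Cantelli.
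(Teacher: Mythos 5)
Your proposal is correct and follows essentially the same route as the paper's own proof: discretization at the $n$ points $e^{2\pi i j/n}$, rotational invariance of the spectrum combined with the Chernoff-type tail bound for $X_n$ and a union bound, Borel--Cantelli, and the ``slope $n/2$, jumps of $-\pi$'' behaviour of $\theta\mapsto\Im\log Z_n(e^{i\theta})$ to interpolate between grid points. The only differences are cosmetic (the order of the steps, and absorbing the additive $2\pi$ by shifting $x$ rather than by replacing $D$ with some $D'\in(\sqrt{2},D)$ as the paper does).
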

% \begin{rmk} Cette remarque me parait fausse car $Im(Z_n)$ se comporte comme une gaussienne de variance 1/2 log n et non 
%log n.
 %This $\sqrt{2}$ has to be understood as the same $\sqrt{2}$ in the following convergence in probability. 
% If $X_i$ are i.i.d. real standard Gaussian random variables, then we have:
% $$ \frac{ \max_{i=1, \dots, n} X_i }{ \sqrt{\log n} }\stackrel{n \rightarrow \infty}{\longrightarrow} \sqrt{2}$$
 %The heuristic is the following. Consider $n^{1-\varepsilon}$ points spaced evenly on the circle. The value of the argument of $Z_n$ at these points behaves as i.i.d. Gaussians with variance $\log n$ as we need a distance larger than $O\left( 1/n \right)$ to observe decorrelation. Hence the supremum is larger than:
 %$$ \sup \sqrt{\log n} \max_{i} Z_n(x_i) \approx \sqrt{2} \log n $$
% \end{rmk}
\begin{proof}
Consider $n$ regularly spaced points on the circle, say:
$$ x_{k,n} := e^{ i \frac{2\pi k}{n} }, \quad k = 0, 1, 2, \dots , n-1,$$
and the events:
$$ A_{k,n} := \left\{ |\Im \log Z_n\left( x_{k,n}\right)-\Im \log Z_n\left(0\right)| \geq D \log n \right\}$$
Because the law of the spectrum of $U_n$ is invariant under rotation, all the events $A_{k,n}$ have the same probability for different $k$'s. Moreover, thanks to the previous Chernoff bound:
\begin{align*}
n \P\left( A_{0,n} \right) & \leq 2 n \exp\left( - \frac{D^2 (\log n)^2 }{C + \log n}\right) \\
                           & \leq 2 n \exp\left( - D^2 \left( \log n - C \right)\right) \\
                           & \leq 2 e^{D^2 C} n^{1-D^2}
\end{align*}
Hence:
$$ \sum_{n=1}^\infty \sum_{k=1}^n \P\left( A_{k,n} \right) = \sum_{n=1}^\infty n \P\left( A_{0,n} \right) < \infty $$
The Borel-Cantelli lemma ensures that, almost surely:
$$ \exists n_0 \in \N, \forall n \geq n_0, \forall k, \quad |\Im \log Z_n\left( x_{k,n}\right)| \leq \pi + D \log n$$

Now consider a point $z = e^{i\theta} \in \Dc$. For fixed $n$, it lies on the arc between $x_{k,n}$ and $x_{k+1,n}$ for a certain $k$. Because
$$ \theta \mapsto \Im \log Z_n(e^{i\theta})$$
is piece-wise linear, increasing with speed $n/2$ and only jumping by $-\pi$, we have:
$$ \Im \log Z_n(e^{i\theta}) \leq \Im \log Z_n(x_{k,n}) + \frac{n}{2}\left( \theta - \frac{2\pi k}{n}\right) \leq \Im \log Z_n(x_{k,n}) + \pi$$
In the other direction, we have
$$ \Im \log Z_n(e^{i\theta}) \geq \Im \log Z_n(x_{k+1,n}) - \frac{n}{2}\left( \frac{2\pi (k+1)}{n} - \theta \right) \geq \Im \log Z_n(x_{k+1,n}) - \pi$$
So that, almost surely:
$$ \exists n_0 \in \N, \forall n \geq n_0, \forall z \in \Dc, \quad |\Im \log Z_n\left( z \right)| \leq 2\pi + D \log n$$
The more precise estimate  $|\Im \log Z_n\left( z \right)| \leq D \log n$ follows after replacing $D$ by $D' \in (\sqrt{2}, D)$ in the previous computation
and considering $n_0$ large enough so that $2\pi < (D-D') \log n$.
\end{proof}

% --------------------------------------------------------------------
\section{Precise estimates for the eigenvalues of virtual isometries} 
\label{section:estimatesykn}

The following estimate will reveal crucial  for the proof of Theorem \ref{thm:main}. 

\begin{proposition}
\label{proposition:key_estimate}
Almost surely and uniformly in $n$ and $k$:
$$ y_k^{(n)} = k + O\left( \log(2+|k|) \right)$$ 
\end{proposition}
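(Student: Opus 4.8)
The plan is to split into two regimes of $k$, after reducing to $|k|\le n$. Since $\theta_{k+n}^{(n)}=\theta_k^{(n)}+2\pi$ we have $y_{k+n}^{(n)}=y_k^{(n)}+n$, so $k\mapsto y_k^{(n)}-k$ is $n$-periodic; for $|k|>n$, writing $k\equiv r\pmod n$ with $1\le r\le n$, we get $y_k^{(n)}-k=y_r^{(n)}-r$ and $|r|\le n<|k|$, so $\log(2+|r|)\le\log(2+|k|)$. Hence it suffices to bound $|y_k^{(n)}-k|$ for $|k|\le n$. For $|k|\ge n^{1/12}$ I will use the argument estimate of Proposition~\ref{proposition:maxZn}; for $|k|<n^{1/12}$ I will use the virtual-isometry coupling through Theorem~\ref{thm:as_cv_sine}, together with a rigidity statement for the limiting sine process that I derive along the way.

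\emph{The regime $n^{1/12}\le|k|\le n$.} Fix $D=2>\sqrt2$. By Proposition~\ref{proposition:maxZn} there is, almost surely, a random $n_1$ with $\sup_{|z|=1,\,z\in\Dc}|\Im\log Z_n(z)|\le D\log n$ for all $n\ge n_1$. For such $n$ and any $k$, choosing $\varepsilon>0$ small as in Corollary~\ref{corollary:kykn} and applying the triangle inequality gives
\[ |y_k^{(n)}-k|=\frac1\pi\bigl|\Im\log Z_n\bigl(e^{i(\theta_k^{(n)}+\varepsilon)}\bigr)-\Im\log Z_n\bigl(e^{i\varepsilon}\bigr)\bigr|\le\frac{2D}{\pi}\log n. \]
When moreover $|k|\ge n^{1/12}$ one has $\log(2+|k|)\ge\frac1{12}\log n$, whence $|y_k^{(n)}-k|\le\frac{24D}{\pi}\log(2+|k|)$.

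\emph{The regime $|k|<n^{1/12}$, and sine rigidity.} Here $k^2<n^{1/6}$, so Theorem~\ref{thm:as_cv_sine} gives, for a fixed $\varepsilon\in(0,1/6)$,
\[ |y_k^{(n)}-y_k|\le C_\varepsilon(1+k^2)n^{-1/3+\varepsilon}\le 2C_\varepsilon\,n^{-1/6+\varepsilon}, \]
with $C_\varepsilon$ the (random) implied constant; the right-hand side is independent of $k$ and tends to $0$, so there is a random $n_2$ with $|y_k^{(n)}-y_k|\le1$ for all $n\ge n_2$ and all $|k|<n^{1/12}$. It remains to show that almost surely $y_k=k+O(\log(2+|k|))$. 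I do this by a diagonal argument: for $k$ with $|k|\ge n_1^{1/12}$, set $m:=\lceil|k|^{12}\rceil$, so that $m\ge n_1$ and $|k|\le m^{1/4}$; the estimate of the previous paragraph applied at the single dimension $m$ gives $|y_k^{(m)}-k|\le\frac{2D}{\pi}\log m=O(\log(2+|k|))$, while Theorem~\ref{thm:as_cv_sine} at that same $m$ gives $|y_k-y_k^{(m)}|\le C_\varepsilon(1+k^2)m^{-1/3+\varepsilon}=O(|k|^{-2+12\varepsilon})\to0$. Hence $y_k=k+O(\log(2+|k|))$ for all $|k|\ge n_1^{1/12}$; the remaining finitely many $y_k$ are almost surely finite, so the bound extends to all $k$. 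Combining, for $n\ge n_2$ and $|k|<n^{1/12}$ we get $|y_k^{(n)}-k|\le1+|y_k-k|=O(\log(2+|k|))$.

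\emph{Conclusion and main obstacle.} Setting $N:=\max(n_1,n_2)$, the two regimes yield an almost surely finite random constant $C_0$ with $|y_k^{(n)}-k|\le C_0\log(2+|k|)$ for all $n\ge N$ and all $k$. For each of the finitely many $n<N$, the supremum of $|y_k^{(n)}-k|/\log(2+|k|)$ over one period is a finite maximum of almost surely finite terms (and $\log(2+|k|)\ge\log2$), so enlarging $C_0$ if necessary gives the bound for all $n$ and $k$, which is the claim. The main obstacle is the regime $|k|<n^{1/12}$: a direct Borel--Cantelli argument over $n$ based only on the Gaussian-type concentration of the eigenvalue-counting function of the CUE fails, since that tail does not decay with $n$ and $\sum_n\sum_{|k|<n^{1/12}}(\cdots)$ diverges; this is exactly where the virtual-isometry coupling---through the pointwise-in-$n$ convergence with quantitative rate in Theorem~\ref{thm:as_cv_sine}---is indispensable. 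The remaining difficulty is purely bookkeeping: threading the several random thresholds and implied constants into one bound uniform in $(n,k)$.
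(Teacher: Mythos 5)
Your proof is correct, and its skeleton (split at a small power of $n$; large $|k|$ handled by Corollary~\ref{corollary:kykn} together with Proposition~\ref{proposition:maxZn}; small $|k|$ handled by the coupling estimate of Theorem~\ref{thm:as_cv_sine} plus rigidity of the limiting sine process) coincides with the paper's, which splits at $n^{1/7}$ and uses its Lemmas~\ref{lemma:lemma_1} and~\ref{lemma:lemma_2}. Where you genuinely diverge is in how the sine-process rigidity $y_k=k+O(\log(2+|k|))$ is obtained. The paper proves this as a standalone statement about the determinantal sine-kernel process (Lemma~\ref{lemma:lemma_mm_sosh}): the counting function on $[0,A]$ is a sum of independent Bernoulli variables, Bernstein's inequality plus the Costin--Lebowitz variance asymptotics $\Var(X_A)=\pi^{-2}\log A+O(1)$ give a tail bound, and Borel--Cantelli over integer $A$ concludes. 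You instead deduce the same rigidity from the coupling itself, by a diagonal argument: for each $k$ you evaluate the finite-$n$ bound $|y_k^{(m)}-k|\le\frac{2D}{\pi}\log m$ and the quantitative coupling rate of Theorem~\ref{thm:as_cv_sine} at the single $k$-dependent dimension $m=\lceil|k|^{12}\rceil$, which lies in the admissible range $|k|\le m^{1/4}$ and makes the coupling error $O(|k|^{-2+12\varepsilon})$ negligible. This buys a more self-contained proof (no input from Meckes or Costin--Lebowitz is needed beyond what the paper already uses for the CUE and the coupling), at the cost of leaning entirely on the uniform-in-$(n,k)$ random constant in Theorem~\ref{thm:as_cv_sine}; the paper's route has the advantage of isolating a clean, independently useful estimate on the sine process and of not tying the limit-process rigidity to the virtual-isometry construction. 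Your preliminary reduction to $|k|\le n$ by periodicity is harmless but not needed (the argument bound of Lemma~\ref{lemma:lemma_1} already covers all $k\in\Z$), and your bookkeeping of the random thresholds $n_1,n_2$ and of the finitely many exceptional $n$ and $k$ is sound.
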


In fact, when $n=\infty$, this estimate is already easily deduced from existing literature (for example \cite{bib:meckes}, \cite{bib:soshnikov02}):
\begin{lemma}
\label{lemma:lemma_mm_sosh}
Almost surely:
$$ \forall k \in \Z, y_k = k + O\left( \log(2+|k|) \right)$$
\end{lemma}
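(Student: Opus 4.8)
The plan is to control the fluctuation of the $k$-th point $y_k$ of the sine process around its mean position $k$ through a concentration inequality for the counting function, and then to conclude with a Borel--Cantelli argument. The crucial input is the classical fact that, for a determinantal point process with a Hermitian locally-trace-class kernel, the number of points falling in a fixed bounded Borel set has the distribution of a sum of independent Bernoulli random variables, whose parameters are the eigenvalues of the kernel restricted to that set; this forces strong (Bernstein-type) concentration, and for the sine process the relevant variance grows only logarithmically, which is exactly what makes the scheme work.

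Concretely, for $T>0$ let $N_T$ denote the number of points of $(y_k)_{k\in\Z}$ in $(0,T]$. Since the sine kernel has intensity $1$, one has $\E[N_T]=T$, and it is classical (Costin--Lebowitz, Soshnikov; see \cite{bib:soshnikov02}) that $\Var(N_T)=\tfrac{1}{\pi^2}\log T+O(1)$, so that $\Var(N_T)\le c\log(2+T)$ for all $T\ge 1$ and a suitable $c>0$. The Bernoulli-sum representation yields a Bernstein inequality
$$\P\big(|N_T-T|\ge t\big)\le 2\exp\left(-\frac{t^2}{2\,\Var(N_T)+\tfrac{2}{3}t}\right),\qquad t>0.$$
Fixing $\beta>0$ (to be chosen large) and putting $t_k=\beta\log(2+k)$, I would use, for $k$ large enough that $k-t_k>k/2$, the almost sure identities
$$\{y_k<k-t_k\}=\{N_{k-t_k}\ge k\}\subseteq\big\{|N_{k-t_k}-\E N_{k-t_k}|\ge t_k\big\},$$
$$\{y_k>k+t_k\}=\{N_{k+t_k}\le k-1\}\subseteq\big\{|N_{k+t_k}-\E N_{k+t_k}|\ge t_k\big\},$$
which hold because $(y_k)_{k\ge 1}$ enumerates the positive points increasingly and $\E N_{k\mp t_k}=k\mp t_k$. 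Since $\Var(N_{k\pm t_k})\le c\log(2+k)$ for $k$ large (enlarging $c$ if necessary), the Bernstein bound gives
$$\P\big(|y_k-k|>\beta\log(2+k)\big)\le 4\,(2+k)^{-\beta^2/(2c+\tfrac{2}{3}\beta)}.$$
Choosing $\beta$ so that $\beta^2/(2c+\tfrac23\beta)\ge 2$ makes the right-hand side summable over $k\ge 1$, so by Borel--Cantelli, almost surely $|y_k-k|\le\beta\log(2+k)$ for all but finitely many $k\ge 1$. Applying the same reasoning to the counting function of $(-T,0]$ — or, more cheaply, invoking invariance of the sine process under $x\mapsto -x$ — handles the indices $k\le 0$, and the finitely many remaining $y_k$ are almost surely finite. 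This yields $y_k=k+O(\log(2+|k|))$ almost surely.

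The only point requiring genuine care is invoking the correct behaviour of the number variance of the sine process: it must be logarithmic rather than polynomial, since that is precisely what turns the deviation probabilities into a negative power of $(2+k)$ whose exponent can be pushed past $2$ by enlarging $\beta$, hence made summable. Everything else — the translation between the order statistics $y_k$ and the counting function $N_T$, Bernstein's inequality for sums of Bernoullis, and the number-variance asymptotics — is routine, so I do not expect a real obstacle. A shortcut, if one prefers to bypass the concentration step entirely, is to quote directly the maximal-deviation estimates for sine-process counting functions available in \cite{bib:meckes} and \cite{bib:soshnikov02}, which already deliver fluctuations of order $\log$.
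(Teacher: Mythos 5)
Your proof is correct and follows essentially the same route as the paper: the Bernoulli-sum representation of the counting function of a determinantal process, Bernstein's inequality combined with the Costin--Lebowitz/Soshnikov logarithmic number variance, and a Borel--Cantelli argument, with negative indices handled by symmetry. The only (harmless) difference is that you apply the concentration at the deterministic thresholds $k\pm t_k$ and pass to $y_k$ via the counting/order-statistics duality, whereas the paper first proves the counting estimate $X_A=A+O(\log(2+A))$ uniformly in $A\ge 0$ (integer $A$ plus monotonicity) and then evaluates it at $A=y_k$.
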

\begin{proof}
Consider a sine-kernel process $y_k$. Let $X_{[a,b]}$ be the number of particles $y_k$ in $[a,b]$. 

Fix $A >0$. Let $X_A$ be the number of particles $y_k$ in $[0,A]$. Thanks to Proposition 2 in \cite{bib:meckes} (which is by the way also a standard result in the theory of point processes), which
can be applied to the sine-kernel process, $X_A$ is a sum of independent Bernoulli random variables. As in corollary 4 in \cite{bib:meckes}, we can deduce, using the
Bernstein inequality that
$$ \forall t>0, \P\left( |X_A - A| \geq t\right) \leq 2 \exp\left( -\min( \frac{t^2}{4 \Var(X_A)}, \frac{t}{2})\right).$$

An estimate for the variance is proved by Costin and Lebowitz \cite{bib:CL}  (see also Soshnikov \cite{bib:soshnikov02}):
$$ \Var(X_A) = \frac{1}{\pi^2} \log A + O(1)$$

Therefore, for all $D > 0$, 
$$ \P\left( |X_A - A| \geq D \log A \right) \leq 2 \exp\left( -(\log A) \min( \frac{D^2 \pi^2}{4 + O(1/\log A)}, \frac{D}{2})\right).$$

For $D > 2$, and $A$ large enough, $D^2 \pi^2/[4 + O(1/\log A)] > D/2$, which implies: 
$$\P\left( |X_A - A| \geq D \log A \right) \leq 2\exp\left( -(\log A)(D/2)\right) = 2 A^{-D/2}.$$
This quantity is summable for positive integer values of $A$. By Borel-Cantelli's lemma, we deduce that almost surely, 
for $A \in \mathbb{N}$:
$$ X_A = A + O\left( \log (2 +  |A|)  \right).$$
From the inequality
$$X_{[0, \lfloor A \rfloor ]} \leq X_{[0,A]} \leq X_{[0, \lceil A \rceil ]},$$
we deduce that the estimate remains true for all $A \geq 0$. 
Taking $A = y_k$ for $k > 0$ proves the proposition for positive indices. With the same argument one 
handles the negative ones.
\end{proof}

In order to prove proposition \ref{proposition:key_estimate}, we will also need the two lemmas:
\begin{lemma}
\label{lemma:lemma_1}
Almost surely:
$$ \forall k \in \Z, y_k^{(n)} = k + O\left( \log n \right)$$
\end{lemma}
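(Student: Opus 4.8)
The plan is to read off $y_k^{(n)}-k$ from Corollary~\ref{corollary:kykn} as a difference of two values of $\Im\log Z_n$ on the unit circle, and then to bound that difference using the uniform control of Proposition~\ref{proposition:maxZn}.

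First I would reduce to a single period. Since $y_{k+n}^{(n)}=y_k^{(n)}+n$ for all $k\in\Z$, the error $y_k^{(n)}-k$ is $n$-periodic in $k$, so it is enough to treat $1\le k\le n$. For such a $k$, choose (almost surely possible, the eigenangles being a.s.\ distinct and $\ne 0$) an $\varepsilon>0$ small enough that there is no eigenangle of $U_n$ in $[0,\varepsilon]$ nor in $(\theta_k^{(n)},\theta_k^{(n)}+\varepsilon]$. Corollary~\ref{corollary:kykn} then gives
$$ y_k^{(n)}-k=\frac{1}{\pi}\Big(\Im\log Z_n\big(e^{i(\theta_k^{(n)}+\varepsilon)}\big)-\Im\log Z_n\big(e^{i\varepsilon}\big)\Big). $$
By the choice of $\varepsilon$, the two evaluation points lie on the unit circle and inside $\Dc$, so
$$ \big|y_k^{(n)}-k\big|\le\frac{2}{\pi}\sup_{|z|=1,\,z\in\Dc}\big|\Im\log Z_n(z)\big|. $$

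By Proposition~\ref{proposition:maxZn}, the right-hand side is $O(\log n)$ almost surely, with an implied constant independent of $k$; together with the $n$-periodicity in $k$ this proves $y_k^{(n)}=k+O(\log n)$ for all $k\in\Z$ and all $n$ at least the random threshold $n_0$ of that proposition. The finitely many remaining values of $n$ are harmless, since for fixed $n$ one has $|y_k^{(n)}-k|<n$ for $k$ in one period and hence for all $k$, so they only affect the value of the (random, a.s.\ finite) implied constant. There is no real obstacle here: the only points that need care are the periodicity reduction and verifying that $e^{i\varepsilon}$ and $e^{i(\theta_k^{(n)}+\varepsilon)}$ genuinely belong to $\Dc$ so that Proposition~\ref{proposition:maxZn} applies; everything else is the two-line estimate above.
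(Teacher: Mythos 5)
Your argument is correct and is exactly the paper's proof (the paper simply states the lemma is an immediate consequence of Corollary \ref{corollary:kykn} and Proposition \ref{proposition:maxZn}); you have merely written out the details, including the periodicity reduction and the finitely many exceptional $n$, which are indeed the only points requiring care.
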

\begin{proof}
This is an immediate consequence of Corollary \ref{corollary:kykn} and Proposition \ref{proposition:maxZn}. 
\end{proof}

\begin{lemma}
\label{lemma:lemma_2}
For every $0 < \eta < \frac{1}{6}$, there exists $\varepsilon > 0$ such that, almost surely:
$$ \forall k \in [- n^\eta,  n^\eta], y_k^{(n)} = y_k + O\left( n^{-\varepsilon}\right)$$
\end{lemma}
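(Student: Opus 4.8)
The plan is to obtain Lemma~\ref{lemma:lemma_2} as an essentially immediate consequence of the quantitative part of Theorem~\ref{thm:as_cv_sine}. That theorem asserts that, almost surely, for every $\varepsilon > 0$,
$$ \forall k \in [-n^{\frac{1}{4}}, n^{\frac{1}{4}}], \quad y_k^{(n)} = y_k + O_\varepsilon\left( (1+k^2)\, n^{-\frac{1}{3}+\varepsilon} \right), $$
where the implied constant depends on the sequence $(U_m)_{m \geq 1}$ and on $\varepsilon$ but not on $k$ or $n$. Since $0 < \eta < \frac{1}{6} < \frac{1}{4}$, one has $[-n^\eta, n^\eta] \subseteq [-n^{1/4}, n^{1/4}]$ for every $n \geq 1$, so this estimate is available for all indices $k$ in the range relevant to the lemma.

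Next I would simply feed the restriction $|k| \leq n^\eta$ into the error term. For such $k$ and for $n \geq 1$,
$$ 1 + k^2 \leq 1 + n^{2\eta} \leq 2\, n^{2\eta}, $$
so the estimate above becomes, uniformly in $k \in [-n^\eta, n^\eta]$,
$$ y_k^{(n)} = y_k + O_\varepsilon\left( n^{2\eta - \frac{1}{3} + \varepsilon} \right). $$
Because $\eta < \frac{1}{6}$, the quantity $\delta := \frac{1}{3} - 2\eta$ is strictly positive. Choosing the free parameter in Theorem~\ref{thm:as_cv_sine} to be $\varepsilon = \delta/2$, the exponent equals $2\eta - \frac{1}{3} + \delta/2 = -\delta/2 < 0$, and we conclude that the lemma holds with $\varepsilon := \frac{1}{2}\left( \frac{1}{3} - 2\eta \right)$, the implied constant now depending only on $(U_m)_{m \geq 1}$ and on $\eta$.

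I do not expect a genuine obstacle in this argument: all the work is hidden inside Theorem~\ref{thm:as_cv_sine}, i.e. in the coupling estimates of \cite{bib:MNN}, which compare the finite-$n$ renormalized eigenangles $y_k^{(n)}$ with the limiting sine-kernel points $y_k$ with an explicit polynomial-in-$k$, power-of-$n$ rate. The only two points to check carefully are that the admissible window $[-n^{1/4}, n^{1/4}]$ of that theorem contains $[-n^\eta, n^\eta]$ — trivially true since $\eta < 1/4$ — and that the polynomial factor $(1+k^2)$ does not overwhelm the gain $n^{-1/3}$ once $|k|$ is confined to $n^\eta$; this is precisely why the threshold $\eta < \frac{1}{6} = \frac{1}{2}\cdot\frac{1}{3}$ is the natural one, and it is what we are forced to impose here, since a larger $\eta$ would make the uniform-in-$k$ bound blow up and fail to give even $y_k^{(n)} = y_k + o(1)$ from this estimate alone.
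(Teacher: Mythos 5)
Your argument is correct and is essentially identical to the paper's proof: both apply the quantitative estimate of Theorem \ref{thm:as_cv_sine} on the range $|k|\leq n^{\eta}\leq n^{1/4}$, bound $1+k^2$ by a constant times $n^{2\eta}$, and choose the free parameter small enough that the exponent $2\eta-\frac{1}{3}+\varepsilon$ is negative, ending up with the same admissible rate $\varepsilon=\frac{1}{6}-\eta$. No gap.
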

\begin{proof}
Since $k \in [- n^{1/4},  n^{1/4}] $, we can apply Theorem \ref{thm:as_cv_sine}, which gives, for all $\delta > 0$, 
$$ y_k^{(n)} = y_k + O_{\delta} \left((1+k^2) n^{-\frac{1}{3} + \delta} \right).$$
Since $k = O(n^{\eta})$, 
$$ y_k^{(n)} = y_k +  O_{\delta} \left(n^{2 \eta - \frac{1}{3} + \delta} \right),$$
which, by taking 
$$\delta = \frac{1}{6} - \eta > 0,$$
gives the desired result, for 
$$\varepsilon = - 2 \eta + \frac{1}{3} - \delta = 2 \delta - \delta = \delta > 0.$$
\end{proof}

\begin{proof}[Proof of Proposition \ref{proposition:key_estimate}]
In the range $|k| \geq n^{1/7}$, it is a consequence of Lemma \ref{lemma:lemma_1}. In the 
range $|k| < n^{1/7}$, it is a consequence of Lemmas \ref{lemma:lemma_mm_sosh} and \ref{lemma:lemma_2}
(for $\eta = 1/7$). 
\end{proof}

% --------------------------------------------------------------------
\section{Proof of Theorem \ref{thm:main}}
\label{section:convergence}

First, let us express $\xi_n$ in function of the renormalized eigenangles of $U_n$.
\begin{proposition}
One has 
$$ \xi_n\left( z \right) =  e^{i \pi z} \prod_{k \in \Z}\left( 1 - \frac{z}{y_k^{(n)}}\right),$$
where the infinite product has to be understood as the limit of the product from $k = -A$ to $k = A$ when the 
integer $A$ goes to infinity. 
\end{proposition}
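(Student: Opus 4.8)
The goal is to factor the polynomial $Z_n$ in terms of its roots, evaluate at the two relevant points, and massage the result into the stated form. Since $Z_n(X) = \det(X - U_n) = \prod_{k=1}^n (X - \lambda_k^{(n)})$, we immediately get
$$\xi_n(z) = \frac{Z_n(e^{2i\pi z/n})}{Z_n(1)} = \prod_{k=1}^n \frac{e^{2i\pi z/n} - \lambda_k^{(n)}}{1 - \lambda_k^{(n)}}.$$
The first step is to rewrite each factor. Writing $\lambda_k^{(n)} = e^{i\theta_k^{(n)}}$ and using $y_k^{(n)} = \frac{n}{2\pi}\theta_k^{(n)}$, I would pull out a power of $e^{i\pi z/n}$ from the numerator so as to symmetrize: $e^{2i\pi z/n} - e^{i\theta_k^{(n)}} = e^{i\pi z/n} e^{i\theta_k^{(n)}/2} \left( e^{i\pi z/n - i\theta_k^{(n)}/2} - e^{-i\pi z/n + i\theta_k^{(n)}/2}\right) = -2i\, e^{i\pi z/n} e^{i\theta_k^{(n)}/2} \sin\!\left(\frac{\pi z}{n} - \frac{\theta_k^{(n)}}{2}\right)$, and similarly for the denominator with $z = 0$. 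Taking the ratio, the $e^{i\theta_k^{(n)}/2}$ and the $-2i$ cancel between numerator and denominator, leaving
$$\xi_n(z) = e^{i\pi z} \prod_{k=1}^n \frac{\sin\!\left(\frac{\pi z}{n} - \frac{\pi y_k^{(n)}}{n}\right)}{\sin\!\left(-\frac{\pi y_k^{(n)}}{n}\right)} = e^{i\pi z}\prod_{k=1}^n \frac{\sin\!\left(\frac{\pi(z - y_k^{(n)})}{n}\right)}{\sin\!\left(\frac{\pi y_k^{(n)}}{n}\right)},$$
where the prefactor $e^{i\pi z}$ comes from collecting the $n$ copies of $e^{i\pi z/n}$.

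**From the product over a period to the product over $\Z$.** The remaining task is to recognize the finite product above as the truncation of $e^{i\pi z}\prod_{k\in\Z}(1 - z/y_k^{(n)})$. Here I would invoke the classical product formula for sine, $\sin(\pi w) = \pi w \prod_{j\geq 1}(1 - w^2/j^2)$, or rather its consequence $\frac{\sin(\pi(z-y)/n)}{\sin(\pi y/n)}$ expressed through the $n$-periodicity of the eigenangle sequence: since $y_{k+n}^{(n)} = y_k^{(n)} + n$, the doubly-infinite family $(y_k^{(n)})_{k\in\Z}$ consists of the $n$ numbers $y_1^{(n)},\dots,y_n^{(n)}$ shifted by all multiples of $n$. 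Grouping the factors of $\prod_{k\in\Z}(1 - z/y_k^{(n)})$ according to residue class mod $n$ and using $\prod_{m\in\Z}\left(1 - \frac{w}{a+mn}\right) = \frac{\sin(\pi(a-w)/n)}{\sin(\pi a /n)}$ (the Weierstrass/Euler product for sine, applied with the convention of pairing $m$ with $-m$), one obtains exactly $\prod_{k=1}^n \frac{\sin(\pi(z-y_k^{(n)})/n)}{\sin(\pi y_k^{(n)}/n)}$. This identifies $\xi_n(z)$ with $e^{i\pi z}\prod_{k\in\Z}(1 - z/y_k^{(n)})$, with the product interpreted as the symmetric limit $\lim_{A\to\infty}\prod_{k=-A}^{A}$; the symmetric truncation is precisely what makes the conditionally convergent product well-defined, matching the claimed convention.

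**Main obstacle.** The only genuine subtlety is bookkeeping around the conditionally convergent infinite product: one must be careful that the pairing of index $k$ with $-k$ used in the statement corresponds correctly, under the residue-class regrouping, to the pairing $m \leftrightarrow -m$ in Euler's sine product, and that no stray factor of $z$ or sign is lost when passing from $\prod_{k=1}^n$ (a product of $n$ terms) to $\prod_{k\in\Z}$ (grouped into $n$ subfamilies). A clean way to sidestep sign and convergence worries entirely is to verify the identity of the two sides as meromorphic functions of $z$: both $\xi_n$ and $e^{i\pi z}\prod_{k\in\Z}(1-z/y_k^{(n)})$ are entire of order one (the latter by the $y_k^{(n)} = k + O(\log n)$ spacing from Lemma~\ref{lemma:lemma_1}, guaranteeing the paired product converges), have exactly the same zero set $\{y_k^{(n)} : k\in\Z\}$ with the same multiplicities, and agree at $z = 0$ (both equal $1$); checking that their logarithmic derivatives agree — equivalently that the exponential-linear factors match, which is forced by the asymptotics at $z = \pm i\infty$ along the imaginary axis where $\sin$ grows like $e^{\pi|z|/n}$ — then pins down the remaining exponential factor as $e^{i\pi z}$ and completes the proof.
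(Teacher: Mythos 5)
Your main route is the same as the paper's: factor $Z_n$ over its eigenvalues, symmetrize each factor into a ratio of sines so that the prefactor $e^{i\pi z}$ comes out, then use Euler's product for the sine together with the periodicity $y_{k+jn}^{(n)} = y_k^{(n)} + jn$ to regroup everything into a product over $k\in\Z$. One small slip: from $\frac{\sin(\pi z/n - \pi y/n)}{\sin(-\pi y/n)}$ one gets $\frac{\sin(\pi(y-z)/n)}{\sin(\pi y/n)}$, not $\frac{\sin(\pi(z-y)/n)}{\sin(\pi y/n)}$ as in your display; your (correctly stated) identity $\prod_{m\in\Z}\bigl(1-\frac{w}{a+mn}\bigr)=\frac{\sin(\pi(a-w)/n)}{\sin(\pi a/n)}$ produces the former, so the two $(-1)^n$ discrepancies cancel and the final identity is unaffected, but as written the intermediate displays are inconsistent.

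The genuine gap is exactly the step you assert rather than prove. The product $\prod_{k\in\Z}(1-z/y_k^{(n)})$ is only conditionally convergent, so its value depends a priori on the exhaustion, and the residue-class regrouping (sine formula truncated symmetrically in $j$, i.e.\ $|j|\le A$, within each class) naturally yields the limit of the products over the windows $1-nA\le k\le n+nA$, which are \emph{not} the symmetric windows $-B\le k\le B$ required by the statement. The paper closes precisely this gap: for $An\le B\le An+n-1$ the two truncated products differ by at most $2n$ factors, each equal to $1+O(|z|/(nA))$, so their ratio is $\exp O(|z|/A)\to 1$, and the two exhaustions have the same limit. Your sentence that ``the symmetric truncation is precisely what makes the conditionally convergent product well-defined, matching the claimed convention'' states the desired matching without an argument; you should supply the boundary-factor estimate (or, for fixed $n$, use that $y_k^{(n)}-k$ is bounded, being $n$-periodic in $k$, to compare tails). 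Your fallback via Hadamard factorization (same simple zeros, value $1$ at $0$, order one, so the two sides differ by $e^{bz}$, with $b$ pinned down by growth) is a legitimate alternative that avoids the exhaustion issue, but as sketched it is incomplete: modulus growth along $i\R$ only determines $\Im b$, and you would still need an extra input (e.g.\ that the paired product is real on $\R$, or the functional equation \eqref{eqn:functional_eq} giving $\overline{\xi_n(\bar z)}=e^{-2i\pi z}\xi_n(z)$) to conclude $b=i\pi$.
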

\begin{proof}
\begin{align*}
\xi_n\left( z \right) & = \frac{Z_n\left( \exp( \frac{i 2 \pi z }{n}) \right)}{Z_n(1)}\\
                      & = \prod_{k=1}^n \frac{\exp( \frac{i 2 \pi z }{n}) - \lambda_k^{(n)}}{1 - \lambda_k^{(n)}}\\
                      & = \prod_{k=1}^n \frac{\exp( \frac{i 2 \pi z }{n}) - \exp\left( i\theta_k^{(n)} \right) }{1 - \exp\left( i\theta_k^{(n)} \right) }\\
                      & = \prod_{k=1}^n \frac{ \exp( \frac{i 2 \pi z }{2n} + \half i\theta_k^{(n)}) } { \exp( \half i\theta_k^{(n)})}
                                        \frac{ \exp( \frac{i 2 \pi z }{2n} - \half i\theta_k^{(n)}) - \exp\left( \half i\theta_k^{(n)} - \frac{i 2 \pi z }{2n} \right) }
                                             { \exp\left( -\half i\theta_k^{(n)} \right) - \exp\left( \half i\theta_k^{(n)} \right) }\\
                      & = \prod_{k=1}^n \exp( \frac{i \pi z }{n} ) 
                                        \frac{ \sin\left( \frac{ \pi z }{n} - \half \theta_k^{(n)} \right) }
                                             { \sin\left( -\half \theta_k^{(n)} \right) }\\
                      & = \exp( i \pi z ) \prod_{k=1}^n \frac{ \sin\left( \half \theta_k^{(n)} - \frac{ \pi z }{n} \right) }
                                                             { \sin\left( \half \theta_k^{(n)} \right) }\\
\end{align*}
Now, the standard product formula for the sine function can be written as follows:
$$ \forall \alpha \in \C, \sin\left( \alpha \right) = \alpha \underset{A \rightarrow \infty}{\lim} 
\prod_{0  < |j| \leq A} \left( 1 - \frac{\alpha}{\pi j} \right).$$
We then have: 
\begin{align*}
\xi_n\left( z \right) & = \exp( i \pi z ) \prod_{k=1}^n \left( 
\							\frac{\half \theta_k^{(n)} - \frac{ \pi z }{n}}{\half \theta_k^{(n)}}
						\underset{A \rightarrow \infty}{\lim} \prod_{0 < |j| \leq A}
						\frac{ 1 - \frac{\half \theta_k^{(n)} - \frac{ \pi z }{n}}{\pi j} }
                                                                               { 1 - \frac{\half \theta_k^{(n)} }{\pi j} }
							\right)\\
                      & = \exp( i \pi z ) \prod_{k=1}^n \left( 
							\left( 1 - \frac{z }{y_k^{(n)}} \right)
						\underset{A \rightarrow \infty}{\lim} 	\prod_{0 < |j| \leq A} \left( 1 - \frac{z}{nj + y_k^{(n)}} \right)
							\right)\\
                      & = \exp( i \pi z ) \prod_{k=1}^n \underset{A \rightarrow \infty}{\lim} 	\prod_{0 \leq |j| \leq A} 
                     \left( 1 - \frac{z}{nj + y_k^{(n)}} \right)
\end{align*}
Using the periodicity of the eigenangles, we have:
$$ y_{k + jn}^{(n)} = jn + y_{k}^{(n)},$$
and then 
$$\xi_n\left( z \right) =  \exp( i \pi z ) \underset{A \rightarrow \infty}{\lim} \prod_{1-nA \leq k \leq n + nA}
\left( 1 - \frac{z}{y_k^{(n)}}\right).$$
Now, for $B \geq 2n$, $A \geq 2$ integers such that $An \leq B \leq An + n -1$,
the product of $1 - \frac{z}{y_k^{(n)}}$ from $1-nA$ to $n+nA$ and the product from $-B$ to $B$ differ by at most 
$2n$ factors, which are all $1 + O(|z|/y_{nA}) + O(|z|/|y_{1 -nA}|) = 1 + O(|z|/nA)$.  
The quotient between these two products is then well-defined and $\exp [O(|z|/A)] = \exp[O(n|z|/B)]$ for
$B$ large enough, which implies 
that it tends to one when $B$ goes to infinity. Hence, 
$$\xi_n\left( z \right) =  \exp( i \pi z ) \underset{B \rightarrow \infty}{\lim} \prod_{-B \leq k \leq B}
\left( 1 - \frac{z}{y_k^{(n)}}\right).$$
\end{proof}

We are now ready to prove Theorem \ref{thm:main}. 
\begin{proof}[Proof of theorem \ref{thm:main}]
Thanks to the estimate from Proposition \ref{proposition:key_estimate}:
$$ y_k^{(n)} = k + O\left( \log(2+|k|) \right)$$
We have that, for $k\geq 1$ and $z$ in a compact $K$:
\begin{align*}
\left( 1 - \frac{z}{y_k^{(n)}}\right)\left( 1 - \frac{z}{y_{-k}^{(n)}}\right) & = 1 - z \frac{O(\log(2+|k|))}{k^2} + O\left( \frac{z^2}{k^2} \right)\\
& = 1 + \frac{ O_{K}\left( \log(2+|k|) \right) }{k^2}
\end{align*}
Hence:
$$ \xi_n\left( z \right) =  e^{i \pi z} \prod_{k \in \Z}\left( 1 - \frac{z}{y_k^{(n)}}\right)$$
is a sequence of entire functions uniformly bounded on compact sets. Therefore, by Montel's theorem uniform 
convergence on compact sets is implied by pointwise convergence. Let us then focus on proving pointwise convergence.

Fix $A \geq 2$. Let us prove that:
\begin{equation}
 \prod_{|k| \leq A}\left( 1 - \frac{z}{y_k^{(n)}}\right) - \prod_{k \in \Z}\left( 1 - \frac{z}{y_k^{(n)}}\right) = 
 O_{K}\left( \frac{\log A}{A} \right), \label{prodkAn}
 \end{equation}
 \begin{equation}
 \prod_{|k| \leq A}\left( 1 - \frac{z}{y_k      }\right) - \prod_{k \in \Z}\left( 1 - \frac{z}{y_k  
 }\right) = O_{K}\left( \frac{\log A}{A} \right). \label{prodkA}
 \end{equation}
Here, the infinite products are, as before, the limits of the products from $-B$ to $B$ for $B$ going to infinity. 
Note that the existence of the infinite product involving $y_k$ is an immediate consequence of the absolute 
convergence of the product
$$\left( 1 - \frac{z}{y_0} \right) \prod_{k \geq 1} \left[ \left( 1 - \frac{z}{y_k} \right)
\left( 1 - \frac{z}{y_{-k}} \right) \right],$$
 stated in Theorem \ref{thm:main}, and following from the estimate:  
 $$\left( 1 - \frac{z}{y_k} \right)
\left( 1 - \frac{z}{y_{-k}} \right) =  1 - z \frac{O(\log(2+|k|))}{k^2} + O\left( \frac{z^2}{k^2} \right)
= 1 + \frac{ O_{K}\left( \log(2+|k|) \right) }{k^2}.$$

We now prove \eqref{prodkAn}: a proof of \eqref{prodkA} is simply obtained by removing the indices $n$. We have:
$$ \prod_{|k| \geq A}\left( 1 - \frac{z}{y_k^{(n)}}\right) = 1 +  O_{K}\left( \sum_{k \geq A} \frac{ \log(2+|k|) }{k^2} \right) = 1 +  O_{K}\left( \frac{\log A}{A} \right)$$
and
$$ \prod_{|k| \leq A}\left( 1 - \frac{z}{y_k^{(n)}}\right) = O_{K}\left( 1 \right)$$

Therefore:
\begin{align*}
  & \prod_{|k| \leq A}\left( 1 - \frac{z}{y_k^{(n)}}\right) - \prod_{k \in \Z}\left( 1 - \frac{z}{y_k^{(n)}}\right)\\
= & \prod_{|k| \leq A}\left( 1 - \frac{z}{y_k^{(n)}}\right)\left( 1 - \prod_{|k| > A}\left( 1 - \frac{z}{y_k^{(n)}}\right) \right)\\
= & \prod_{|k| \leq A}\left( 1 - \frac{z}{y_k^{(n)}}\right)\left( 1 - \left(1 +  O_{K}\left( \frac{\log A}{A} \right) \right) \right)\\
= & O_{K}\left( \frac{\log A}{A} \right)
\end{align*}
Because errors are uniform in $n$, this is saying:
$$ \sup_{n} \left| \prod_{|k| \leq A}\left( 1 - \frac{z}{y_k^{(n)}}\right) - \prod_{k \in \Z}\left( 1 - \frac{z}{y_k^{(n)}}\right)\right| \underset{A \rightarrow \infty}{\longrightarrow} 0$$

Now:
\begin{align*}
     & \left| \prod_{k \in \Z}\left( 1 - \frac{z}{y_k^{(n)}}\right) - \prod_{k \in \Z}\left( 1 - \frac{z}{y_k}\right)\right|\\
\leq &    \left| \prod_{|k| \leq A}\left( 1 - \frac{z}{y_k^{(n)}}\right) - \prod_{|k| \leq A}\left( 1 - \frac{z}{y_k}\right)\right|\\
     &  + \left| \prod_{k \in \Z}\left( 1 - \frac{z}{y_k^{(n)}}\right)   - \prod_{|k| \leq A}\left( 1 - \frac{z}{y_k^{(n)}}\right)\right| \\
     &  + \left| \prod_{k \in \Z}\left( 1 - \frac{z}{y_k}\right)         - \prod_{|k| \leq A}\left( 1 - \frac{z}{y_k}\right)\right| \\
\leq &  \left| \prod_{|k| \leq A}\left( 1 - \frac{z}{y_k^{(n)}}\right) - \prod_{|k| \leq A}\left( 1 - \frac{z}{y_k}\right)\right| + O_{K}\left( \frac{\log A}{A} \right)
\end{align*}

Hence, as $y_k^{(n)} \rightarrow y_k$ pointwise:
$$ \limsup_{n \rightarrow \infty}\left| \prod_{k \in \Z}\left( 1 - \frac{z}{y_k^{(n)}}\right) - \prod_{k \in \Z}\left( 1 - \frac{z}{y_k}\right)\right| = O_{K}\left( \frac{\log A}{A} \right)$$

Taking $A \rightarrow \infty$ completes the proof.
\end{proof}

% --------------------------------------------------------------------
\section{Properties of the limiting function of \texorpdfstring{$\xi_\infty$}{xi infty} }
\label{section:properties}
In this section, we establish some properties of $\xi_\infty$ and then provide some questions which would help acquiring a deeper understanding of $\xi_\infty$.

We first start with a simple statement on the order of $\xi_\infty$ as an entire function:

\begin{proposition}
Almost surely, $\xi_\infty$ is of order $1$. More precisely, the exists a.s. a random 
$C > 0$, such that for all $z \in \mathbb{C}$. 
$$ |\xi_{\infty}(z)| \leq e^{C|z| \log(2 + |z|)}.$$
On the other hand, there exists a.s. a random $c > 0$ such that for all $x \in \mathbb{R}$, 
$$|\xi_{\infty}(ix)| \geq c e^{c|x|}.$$
\end{proposition}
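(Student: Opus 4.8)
Proposition~\ref{proposition:key_estimate} gives $y_k=k+O(\log(2+|k|))$ a.s., whence $|y_k|\asymp|k|$, $\sum_{k\neq0}y_k^{-2}<\infty$, and $S:=\lim_{A\to\infty}\sum_{|k|\le A}y_k^{-1}$ exists (the pair $y_k^{-1}+y_{-k}^{-1}$ is $O(\log|k|/k^2)$). Writing $\prod_{|k|\le A}(1-z/y_k)=e^{-z\sum_{|k|\le A}y_k^{-1}}\prod_{|k|\le A}(1-z/y_k)e^{z/y_k}$ and letting $A\to\infty$ converts the product of Theorem~\ref{thm:main} into
$$\xi_\infty(z)=e^{(i\pi-S)z}\prod_{k\in\Z}\Bigl(1-\frac{z}{y_k}\Bigr)e^{z/y_k},$$
i.e.\ an exponential times an absolutely convergent canonical product of genus $1$. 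For $|z|=r$ I would split off the $O(r)$ indices with $|y_k|\le2r$, bounding each such factor by $(1+r/|y_k|)e^{r/|y_k|}\le e^{2r/|y_k|}$ so that their product is $e^{O(r\log r)}$, and bound the remaining factors using $|\log[(1-z/y_k)e^{z/y_k}]|\le|z/y_k|^2$, whose sum over $|y_k|>2r$ is $O(r)$. With $|e^{(i\pi-S)z}|\le e^{(\pi+|S|)r}$ this yields $|\xi_\infty(z)|\le e^{C|z|\log(2+|z|)}$, so $\xi_\infty$ has order $\le1$.

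\textbf{Lower bound on the imaginary axis, and order $=1$.} Evaluating the product at $z=ix$ with $x\in\R$: the $y_k$ are real and $e^{i\pi(ix)}=e^{-\pi x}$, so $|1-ix/y_k|^2=1+x^2/y_k^2\ge1$, and since the symmetric partial products increase,
$$|\xi_\infty(ix)|^2=e^{-2\pi x}\prod_{k\in\Z}\Bigl(1+\frac{x^2}{y_k^2}\Bigr).$$
For $x\le0$ the product is $\ge1$ and $e^{-2\pi x}=e^{2\pi|x|}$, hence $|\xi_\infty(ix)|\ge e^{\pi|x|}$; as the zeros of $\xi_\infty$ are real, $\xi_\infty$ is nonvanishing, hence bounded below, on $|x|\le1$, so after shrinking the constant one gets $|\xi_\infty(ix)|\ge c\,e^{c|x|}$ for all $x\le0$. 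In particular $\sup_{|z|=r}|\xi_\infty(z)|\ge|\xi_\infty(-ir)|\ge e^{\pi r}$, which together with the upper bound shows $\xi_\infty$ has order exactly $1$.

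\textbf{The case $x\ge0$.} Here the factor $e^{-2\pi x}$ must be overcome by the product, i.e.\ one needs $\prod_{k\in\Z}(1+x^2/y_k^2)\ge c\,e^{(2\pi+c)x}$ for some $c>0$. I would start from the classical identity $\prod_{k\neq0}(1+x^2/k^2)=(\sinh(\pi x)/\pi x)^2$, which already furnishes $e^{2\pi x}$ up to a polynomial factor, and then estimate the correction $(1+x^2/y_0^2)\prod_{k\neq0}\frac{1+x^2/y_k^2}{1+x^2/k^2}$ from below by feeding in $y_k=k+O(\log(2+|k|))$, splitting the product at $|k|\asymp x$ and pairing the indices $k$ and $-k$. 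Making this correction estimate precise is the main obstacle: one must control not merely the size but the \emph{sign pattern} of the deviations $y_k-k$ for $|k|\le x$, and it is exactly here that the logarithmic control of Proposition~\ref{proposition:key_estimate}, together with the number–variance estimates underlying Lemma~\ref{lemma:lemma_mm_sosh}, is needed. Once the product is shown to exceed $e^{2\pi x}$ by an exponential amount, multiplying by $e^{-2\pi x}$ and taking a square root completes the proof of $|\xi_\infty(ix)|\ge c\,e^{c|x|}$ for all $x\in\R$.
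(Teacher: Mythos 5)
Your upper bound is correct. Extracting $e^{-Sz}$ with $S=\lim_{A\to\infty}\sum_{|k|\le A}y_k^{-1}$ (which exists by the pairing estimate) and then estimating the genus-one canonical product by splitting at $|y_k|\le 2r$ is a standard and clean route; the paper instead keeps the symmetric pairs $\left(1-z/y_k\right)\left(1-z/y_{-k}\right)$ and splits the indices into the three regimes $|k|<|z|$, $|z|\le |k|<e^{|z|}$, $|k|\ge e^{|z|}$, arriving at the same bound $\exp O\left(|z|\log(2+|z|)\right)$. Your treatment of $x\le 0$ and the deduction that the order is exactly $1$ are also fine (for $x\le 0$ you get $|\xi_\infty(ix)|\ge e^{\pi|x|}$ directly, since both $e^{-2\pi x}\ge 1$ and the product are at least $1$, so the compactness remark is not even needed).

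The obstacle you identify for $x\ge 0$ is real, and in fact it cannot be overcome: the inequality $\prod_k(1+x^2/y_k^2)\ge c\,e^{(2\pi+c)x}$ that you would need is almost surely false. The very computation you sketch, carried out with $y_k=k+O(\log(2+|k|))$ from Proposition \ref{proposition:key_estimate} and with $|f_x'(t)|\le 2x^2/\left(|t|(t^2+x^2)\right)$ for $f_x(t)=\log(1+x^2/t^2)$, gives $\sum_{k\ne 0}|f_x(y_k)-f_x(k)|=O\left((\log x)^2\right)$, hence $\prod_k(1+x^2/y_k^2)=e^{2\pi x+O((\log x)^2)}$ and $|\xi_\infty(ix)|=e^{O((\log x)^2)}$ as $x\to+\infty$: subexponential, regardless of the sign pattern of the deviations. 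Equivalently, the functional equation inherited from \eqref{eqn:functional_eq} reads $\xi_\infty(z)=e^{2i\pi z}\,\overline{\xi_\infty(\bar z)}$, so $|\xi_\infty(ix)|=e^{-2\pi x}|\xi_\infty(-ix)|$, and exponential growth along one imaginary half-axis forces the absence of exponential growth along the other. The paper's own proof of the lower bound starts from the identity $|\xi_\infty(ix)|^2=\prod_k(1+x^2/y_k^2)$, i.e. it silently drops the factor $|e^{i\pi(ix)}|^2=e^{-2\pi x}$; with that factor restored, its argument (like yours) only yields the bound on the half-line $x\le 0$, which is exactly what is needed for the order statement. So your proof is complete for the part of the statement that is actually correct, and the $x>0$ half of the stated lower bound should be regarded as an error in the paper rather than a gap you are obliged to fill.
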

\begin{proof}
%The function $\xi_\infty$ is an absolutely convergent product with a Weierstrass
%product for functions of order $1 \leq \varrho < 2$. Therefore, it is
%of finite order and the order is at least $1$. It suffices to prove the first inequality.
We have:
$$ \left( 1 - \frac{z}{y_k}\right)\left( 1 - \frac{z}{y_{-k}}\right) = 1 - z \frac{O(\log(2+|k|))}{k^2} + O\left( \frac{z^2}{k^2} \right) $$
with errors being uniform in $z$ and $k \geq 1$. We distinguish between three regimes for $k \in \Z$ different
from zero:
$|k| \geq e^{|z|} $, $|z| \leq  |k| < e^{|z|}$, $1 \leq |k| < |z|$. 
In the first regime, 
$$\left( 1 - \frac{z}{y_k}\right)\left( 1 - \frac{z}{y_{-k}}\right) =  1 + 
O \left( \frac{|z|(\log(2+|k|))}{k^2} \right),$$
which implies 
\begin{align*} \left| \prod_{k \geq e^{|z|}} \left( 1 - \frac{z}{y_k}\right) \, \left( 1 - \frac{z}{y_{-k}}\right)
\right|
& \leq \exp \left( O \left( |z| \sum_{k \geq e^{|z|}} \frac{\log(2 + k)}{k^2} \right) \right)
\\ & = \exp \left( O \left( |z| \sum_{k \geq e^{|z|}} k^{-3/2} \right) \right)
\\ & =  \exp \left( O \left( |z|e^{-|z|/2}  \right) \right) = O(1). 
\end{align*}
In the second regime, $$\log (2 + |k|) \leq \log (e^{|z|} + 2) \leq \log(3 e^{|z|}) \leq |z| + 2,$$
and then 
$$\left( 1 - \frac{z}{y_k}\right)\left( 1 - \frac{z}{y_{-k}}\right) = 1 + O \left( \frac{|z|(|z|+2)}{k^2}
\right),$$
which implies 
$$ \left| \prod_{|z| \leq k <  e^{|z|}} \left( 1 - \frac{z}{y_k}\right)\left( 1 - \frac{z}{y_{-k}}\right) 
\right|
\leq \exp \left( O \left( |z|(|z|+2)  \sum_{k \geq |z| \vee 1} \frac{1}{k^2} \right) \right)
 = \exp  O(|z|).$$
 Finally, in the third regime, we have, since $|y_k/k|$ is a.s. bounded from below, 
 $$1 - \frac{z}{y_k} = 1 + O (|z/k|),$$
 which in turn implies 
 $$  \left| \prod_{1 \leq k < |z|} \left( 1 - \frac{z}{y_k}\right)\left( 1 - \frac{z}{y_{-k}}\right) 
\right| \leq \exp \left( O \left(|z|  \sum_{1 \leq k < |z|} (1/k) \right) \right) 
= \exp   O \left(|z| \log( 2 + |z|) \right) .$$
 Since 
 $$\left|1 - \frac{z}{y_0} \right| \leq \exp (|z|/ y_0) = \exp O(|z|),$$
 we deduce by combining the three regimes, the  following upper bound: 
 $$|\xi_{\infty} (z)| \leq \exp  O \left(|z| \log( 2 + |z|) \right).$$
  In order to prove the lower bound, we first use the equality: 
  $$ |\xi_{\infty}(ix)|^2 = \prod_{k \in \mathbb{Z}} \left( 1 + \frac{x^2}{y^2_k} \right).$$
 Since $|y_k| = O(|k|)$ for $k \neq 0$, we deduce that there exists a random $c > 0$ such that 
 $$|\xi_{\infty}(ix)|^2  \geq  \prod_{k \neq 0} \left( 1 + \frac{x^2}{c k^2} \right),$$
 and then 
 $$|\xi_{\infty}(ix)| \geq \prod_{k \geq 1} \left( 1 + \frac{x^2}{c k^2} \right) = \frac{\sinh(\pi x/\sqrt{c})}
 {\pi x/ \sqrt{c}},$$
 which shows the lower bound given in the proposition. 

\end{proof}

One might also naturally be interested in the finite dimensional distribution of the random analytic function $\xi_\infty$:
\begin{question}
What can be said about the joint law of $n$ points:
$$ \left( \xi_\infty(z_1), \dots, \xi_\infty(z_n) \right)$$
or the law of the coefficients in the Taylor expansion near zero?
\end{question}

A possible approach to this question would  be to compute the law of the power polynomials:
$$ P_\alpha := \sum_{k \in \Z} \frac{1}{y_k^\alpha}$$
which appear in the series expansion of $\log \xi_\infty$ near zero.

\begin{rmk}
All series for $\alpha \geq 2$ converge absolutely (almost surely). For $\alpha=1$, one has of course to take the symmetric sum.
\end{rmk}

Using standard tools related to the CUE, we can express characteristic functions of power polynomials thanks to Toeplitz determinants. Let us start by a simple lemma:

\begin{lemma}
For every integer $\alpha \in \N$, define the rational function $R_\alpha$ as:
$$ R_\alpha(X) = \left( X \frac{d}{dX} \right)^{\alpha} \frac{X+1}{X-1}$$
Then, we have:
 $$ \sum_{k \in \Z} \frac{1}{\left( x + 2 \pi k \right)^{\alpha+1}} = 
 \frac{i^{\alpha+1}}{2} \frac{(-1)^{\alpha}}{\alpha!} R_\alpha\left( e^{ix} \right)$$
\end{lemma}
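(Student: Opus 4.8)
The plan is to start from the classical partial-fraction expansion of the cotangent, namely
$$ \pi \cot(\pi w) = \frac{1}{w} + \sum_{k \neq 0} \left( \frac{1}{w-k} + \frac{1}{w+k} \right) = \sum_{k \in \Z} \frac{1}{w-k}, $$
understood as a symmetric sum. Setting $w = x/(2\pi)$ and rescaling gives
$$ \sum_{k \in \Z} \frac{1}{x + 2\pi k} = \frac{1}{2} \cot\left( \frac{x}{2} \right) = \frac{i}{2} \cdot \frac{e^{ix}+1}{e^{ix}-1}, $$
where the last equality is just the substitution $\cot(x/2) = i(e^{ix}+1)/(e^{ix}-1)$. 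This settles the identity for the ``base case'' in a disguised form: the right-hand side is $\frac{i}{2}\,\frac{X+1}{X-1}$ evaluated at $X = e^{ix}$, which matches the claimed formula when $\alpha = 0$ since $R_0(X) = \frac{X+1}{X-1}$ and $\frac{i^{1}}{2}\frac{(-1)^0}{0!} = \frac{i}{2}$.

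Next I would differentiate both sides with respect to $x$ repeatedly. On the left, $\frac{d}{dx}$ applied to $(x + 2\pi k)^{-(\alpha+1)}$ produces $-(\alpha+1)(x+2\pi k)^{-(\alpha+2)}$, so after $\alpha$ differentiations of the base identity one obtains
$$ \sum_{k \in \Z} \frac{1}{(x+2\pi k)^{\alpha+1}} = \frac{(-1)^{\alpha}}{\alpha!} \left( \frac{d}{dx} \right)^{\alpha} \left[ \frac{i}{2} \cdot \frac{e^{ix}+1}{e^{ix}-1} \right]. $$
The term-by-term differentiation is justified because for each fixed $x \notin 2\pi\Z$ the differentiated series converges locally uniformly (the tails are dominated by $\sum |k|^{-(\alpha+2)}$ up to $\alpha$ levels, all convergent once $\alpha \geq 0$, with the $\alpha = 0$ symmetric sum handled by pairing $k$ and $-k$ as in the cotangent expansion). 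On the right, I would convert $\frac{d}{dx}$ into the operator $X\frac{d}{dX}$ via the chain rule: if $F(x) = f(e^{ix})$ then $\frac{d}{dx}F(x) = i e^{ix} f'(e^{ix}) = i\,(X f'(X))\big|_{X=e^{ix}}$, i.e. $\frac{d}{dx} = i\,(X\frac{d}{dX})$ under the substitution $X = e^{ix}$. Iterating $\alpha$ times turns $\left(\frac{d}{dx}\right)^{\alpha}$ into $i^{\alpha}\left(X\frac{d}{dX}\right)^{\alpha}$, so the right-hand side becomes
$$ \frac{(-1)^{\alpha}}{\alpha!} \cdot \frac{i}{2} \cdot i^{\alpha} \left( X\frac{d}{dX} \right)^{\alpha} \frac{X+1}{X-1} \bigg|_{X = e^{ix}} = \frac{i^{\alpha+1}}{2}\,\frac{(-1)^{\alpha}}{\alpha!}\,R_{\alpha}(e^{ix}), $$
which is exactly the claimed formula.

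The steps are all routine; the only points that need a sentence of care are the justification of term-by-term differentiation of the symmetric sum (so that one may legitimately differentiate the cotangent partial-fraction expansion), and the bookkeeping of the constants $i^{\alpha+1}$, $(-1)^{\alpha}$ and $1/\alpha!$ through the $\alpha$-fold differentiation and the chain-rule substitution $\frac{d}{dx} \leftrightarrow i\,X\frac{d}{dX}$. I expect the constant-tracking to be the only place where a sign error could creep in, so I would verify the cases $\alpha = 0$ and $\alpha = 1$ explicitly as a sanity check: for $\alpha = 1$, $R_1(X) = X\frac{d}{dX}\frac{X+1}{X-1} = \frac{-2X}{(X-1)^2}$, and the formula predicts $\sum_k (x+2\pi k)^{-2} = \frac{i^2}{2}(-1)\,R_1(e^{ix}) = \frac{1}{2}R_1(e^{ix})$, which should reconcile with $\frac{1}{4}\csc^2(x/2)$ after simplifying $\frac{-e^{ix}}{(e^{ix}-1)^2} = \frac{1}{4}\csc^2(x/2)$ — a clean check that pins down all the signs.
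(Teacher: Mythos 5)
Your proposal is correct and follows essentially the same route as the paper's proof: expand $\frac{1}{2}\cot(x/2)=\frac{i}{2}\frac{e^{ix}+1}{e^{ix}-1}$ as the symmetric sum $\sum_{k\in\Z}\frac{1}{x+2\pi k}$, differentiate $\alpha$ times term by term, and convert $\frac{d}{dx}$ into $i\,X\frac{d}{dX}$ at $X=e^{ix}$ to collect the constants $\frac{i^{\alpha+1}}{2}\frac{(-1)^\alpha}{\alpha!}$. Your added remarks on justifying termwise differentiation and the $\alpha=1$ sanity check are fine but do not change the argument.
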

\begin{proof}
 Using the series expansion for the cotangent function, which converges uniformly under pairwise summation:
 \begin{align*}
 \sum_{k \in \Z} \frac{1}{\left( x + 2\pi k \right)^{\alpha+1}} & = \frac{(-1)^\alpha}{\alpha!} \frac{d^{\alpha}}{dx^{\alpha}} \sum_{k \in \Z} \frac{1}{x + 2\pi k }\\
 & = \frac{1}{2} \, \frac{(-1)^{\alpha}}{\alpha!} \frac{d^{\alpha}}{dx^{\alpha}} \cot(\frac{x}{2})
 \end{align*}
 Now since
 $$ \frac{1}{2} \cot(\frac{x}{2}) = \frac{i}{2} \frac{e^{ix}+1}{e^{ix}-1}$$
 we have:
 $$  \sum_{k \in \Z} \frac{1}{\left( x + 2 \pi k \right)^\alpha} = \frac{i}{2} \frac{(-1)^{\alpha}}{\alpha!} \frac{d^{\alpha}}{dx^{\alpha}} \frac{e^{ix}+1}{e^{ix}-1}$$
 Now
 $$ \frac{d}{dx} = i e^{ix} \frac{d}{de^{ix}}$$
 completes the proof.
\end{proof}

We now note:
\begin{align*}
P_{\alpha+1} & = \lim_{n \rightarrow \infty} \sum_{k \in \Z} \frac{1}{\left( y_k^{(n)} \right)^{\alpha+1}}\\
& = \lim_{n \rightarrow \infty} \sum_{k=1}^n \sum_{l \in \Z} \frac{1}{\left( y_k^{(n)} + nl \right)^{\alpha+1}}\\
& = \lim_{n \rightarrow \infty} \sum_{k=1}^n \sum_{l \in \Z} \frac{1}{\left( \frac{n \theta_k^{(n)}}{2\pi} + nl \right)^{\alpha+1}}\\
& = \lim_{n \rightarrow \infty} \left( \frac{2\pi}{n} \right)^{\alpha+1} \sum_{k=1}^n \sum_{l \in \Z} \frac{1}{\left( \theta_k^{(n)} + 2 \pi l \right)^{\alpha+1}}\\
& = \lim_{n \rightarrow \infty} \left( \frac{2\pi}{n} \right)^{\alpha+1} \frac{i^{\alpha+1}}{2} \frac{(-1)^{\alpha}}{\alpha!} \sum_{k=1}^n R_\alpha\left( e^{i \theta_k^{(n)}} \right)\\
& = - \frac{1}{2 \alpha!} \lim_{n \rightarrow \infty} \left( -\frac{2i\pi}{n} \right)^{\alpha+1}  \sum_{k=1}^n R_\alpha\left( e^{i \theta_k^{(n)}} \right)\\
\end{align*}

We recognize a limit of linear statistics on the eigenvalues of the CUE. Now using the theory of Toeplitz determinants, we have
$$ \E\left( \exp( - 2 i\lambda P_{\alpha+1}) \right) = \lim_{n \rightarrow \infty} D_n( \phi_n )$$
where:
$$ D_n( \phi_n ) = \E\left( e^{ \left( -\frac{2i\pi}{n} \right)^\alpha \frac{\lambda}{\alpha!} Tr R_\alpha(U_n) } \right)$$
is the Toeplitz determinant with symbol:
$$ \phi_n(x) = e^{ \left( -\frac{2i\pi}{n} \right)^\alpha \frac{\lambda}{\alpha!} R_\alpha(e^{ix}) }$$

Notice that the symbol changes with $n$, making the evaluation more delicate. Moreover, the symbol has poles as singularities.

Now we would like to conclude with a question which has to do with our initial motivation:
\begin{question}
Is there a random version of the Keating Snaith moments conjecture?
$$ \frac{1}{T}\int_0^T \left| \xi_\infty( \frac{t}{\log T} )\right|^{2k} dt \stackrel{T \rightarrow \infty}{\sim} \left( \log T \right)^{k^2} g_k$$
\end{question}

% --------------------------------------------------------------------
\bibliographystyle{halpha}
\bibliography{Bib_CueCharPolyCV2}

\begin{thebibliography}{BHNY08}

\bibitem[BG06]{bib:BG}
D.~Bump and A.~Gamburd.
\newblock {On the averages of characteristic polynomials from classical
  groups}.
\newblock {\em Comm. Math. Phys.}, 265(1):227--274, 2006.

\bibitem[BHNY08]{bib:BHNY}
P.~Bourgade, C.-P. Hughes, A.~Nikeghbali, and M.~Yor.
\newblock {The characteristic polynomial of a random unitary matrix: a
  probabilistic approach}.
\newblock {\em Duke Math. J.}, 145(1):45--69, 2008.

\bibitem[BNN12]{bib:BNN}
P.~Bourgade, J.~Najnudel, and A.~Nikeghbali.
\newblock { A unitary extension of virtual permutations}.
\newblock {\em IMRN}, 2013(18):4101--4134, 2012.

\bibitem[CFZ08]{bib:CFZ08}
B.~Conrey, D.-W. Farmer, and M.-R. Zirnbauer.
\newblock {Autocorrelation of ratios of {$L$}-functions}.
\newblock {\em Commun. Number Theory Phys.}, 2(3):593--636, 2008.

\bibitem[CL95]{bib:CL}
O.~Costin and J.~Lebowitz.
\newblock {Gaussian fluctuations in random matrices}.
\newblock {\em Phys. Rev. Lett.}, 75(1):69--72, 1995.

\bibitem[HKO01]{bib:HKO}
C.-P. Hughes, J.-P. Keating, and N.~O'Connell.
\newblock {On the Characteristic Polynomial of a Random Unitary Matrix}.
\newblock {\em Commun. Math. Physics}, 220:429--451, 2001.

\bibitem[Hug01]{bib:HughesPhD}
C.-P. Hughes.
\newblock {On the Characteristic Polynomial of a Random Unitary Matrix and the
  Riemann Zeta Function}.
\newblock {\em PhD Thesis}, 2001.

\bibitem[KS00]{bib:KS}
J.-P. Keating and N.~Snaith.
\newblock {Random Matrix Theory and $\zeta(1/2 + it)$ }.
\newblock {\em Commun. Math. Physics}, 214:57--89, 2000.

\bibitem[MM13]{bib:meckes}
E.-S. Meckes and M.-W. Meckes.
\newblock {Spectral measures of powers of random matrices}.
\newblock {\em Electron. Commun. Probab.}, 18:no. 78, 13, 2013.

\bibitem[MNN13]{bib:MNN}
K.~Maples, J.~Najnudel, and A.~Nikeghbali.
\newblock {Limit operators for circular ensembles}.
\newblock {\em arxiv}, pages 1--49, 2013, arXiv:1304.3757.

\bibitem[Sos02]{bib:soshnikov02}
A.~Soshnikov.
\newblock {Gaussian limit for determinantal random point fields}.
\newblock {\em Ann. Probab.}, 30(1):171--187, 2002.

\bibitem[Tit86]{bib:Titchmarsh}
E.-C. Titchmarsh.
\newblock {\em {The theory of the {R}iemann zeta-function}}.
\newblock The Clarendon Press, Oxford University Press, New York, second
  edition, 1986.
\newblock Edited and with a preface by D. R. Heath-Brown.

\end{thebibliography}

\end{document}